\newcommand{\maxdim}{n}
\newcommand{\ud}{\mathrm{d}}
\newcommand{\Oml}{\underline{\Omega}}
\newcommand{\Omu}{\overline{\Omega}}
\newcommand{\Om}{\Omega}
\newcommand{\OmlD}[1]{\Oml_{\text{df}(#1)}}
\newcommand{\OmuD}[1]{\Omu_{\text{df}(#1)}}
\newcommand{\OmD}[1]{\Om_{\text{df}(#1)}}
\newcommand{\Th}{\Theta}
\newcommand{\Omk}{\Om_k}
\newcommand{\Omj}{\Om_j}
\newcommand{\sm}{\setminus}
\newcommand{\e}{\varepsilon}
\newcommand{\vphi}{\varphi}
\newcommand{\R}{\mathbb{R}}
\newcommand{\Rn}{\R^\maxdim}
\newcommand{\Rno}{\R^{\maxdim_1}}
\newcommand{\Rnt}{\R^{\maxdim_2}}
\newcommand{\Z}{\mathbb{Z}}
\newcommand{\N}{\mathbb{N}}
\newcommand{\C}{\mathcal{C}}
\newcommand{\lmax}{\lambda_{\text{max}}}
\newcommand{\Bm}{\beta}
\newcommand{\gh}{\hat{g}}
\newcommand{\Tor}{\mathbf{T}}
\newcommand{\Torn}{\Tor^\maxdim}
\newcommand{\Tornn}{\Tor^{2\maxdim}}
\newcommand{\Dn}{\Delta_{\maxdim}}
\newcommand{\Sg}{\mathbf{S}}
\newcommand{\Sk}[1]{\Sg_{#1}}
\newcommand{\Sn}{\Sg_\maxdim}
\newcommand{\Ai}{A_{\textrm{inc}}}
\newcommand{\Ac}{A_{\textrm{coh}}}
\newcommand{\abs}[1]{\left|#1\right|}
\newcommand{\tabs}[1]{\big|#1\big|}
\newcommand{\norm}[1]{\left\|#1\right\|}
\newcommand{\rset}[2]{\left\lbrace\, #1\,\left|\;#2\right.\right\rbrace}
\newcommand{\set}[2]{\rset{#1}{#2}}
\newcommand{\tset}[2]{\big\lbrace #1\,\big|\;#2\big\rbrace}
\newcommand{\sset}[1]{\left\lbrace #1\right\rbrace}
\newcommand{\tsset}[1]{\big\lbrace #1\big\rbrace}
\newcommand{\Cinf}{C^\infty}
\newcommand{\fleft}{\!\left}
\author{Christian Bick and Peter Ashwin}
\address{Centre for Systems, Dynamics and Control and Department of Mathematics, University of Exeter, Exeter EX4 4QF, UK}
\title[Persisting Chaotic Weak Chimeras]{Chaotic Weak Chimeras and their Persistence in Coupled Populations of Phase Oscillators}
\newtheorem{prop}{Proposition}
\newtheorem{lem}{Lemma}
\newtheorem{thm}{Theorem}
\newtheorem{cor}{Corollary}
\theoremstyle{definition}
\newtheorem{defn}{Definition}
\theoremstyle{remark}
\newtheorem{rem}{Remark}
\newcommand{\imagescaling}{0.9}
\begin{document}


\begin{abstract}
Nontrivial collective behavior may emerge from the interactive dynamics of many oscillatory units. Chimera states are chaotic patterns of spatially localized coherent and incoherent oscillations. The recently-introduced notion of a weak chimera gives a rigorously testable characterization of chimera states for finite-dimensional phase oscillator networks. In this paper we give some persistence results for dynamically invariant sets under perturbations and apply them to coupled populations of phase oscillators with generalized coupling. In contrast to the weak chimeras with nonpositive maximal Lyapunov exponents constructed so far, we show that weak chimeras that are chaotic can exist in the limit of vanishing coupling between coupled populations of phase oscillators. We present numerical evidence that positive Lyapunov exponents can persist for a positive measure set of this inter-population coupling strength.
\end{abstract}

\maketitle


\section{Introduction}
The emergence of collective behavior is a fascinating effect of the interaction of oscillatory units~\cite{Strogatz2000, Pikovsky2003, Strogatz2004} and coupled phase oscillators~\cite{Ashwin1992} serve as paradigmatic mathematical models to study these dynamical states in many system of interest ranging from technology to neuroscience~\cite{Acebron2005, Tchistiakov1996, Ashwin2015}.

In addition to global synchronization, the emergence of locally synchronized coherence-incoherence patterns---commonly known as chimera states~\cite{Kuramoto2002, Abrams2004}---has received a lot of attention in recent years~\cite{Panaggio2015a}. These are particularly of interest where the patterns have broken symmetry with respect to the networks. Similar states have been shown to exist in real-world experiments~\cite{Tinsley2012, Hagerstrom2012, Martens2013} and may be exploited for applications~\cite{Bick2014a}. While chimera states are typically studied at or near the limit of infinitely many oscillators as stationary patterns of the phase density distribution~\cite{Omel'chenko2013}, they have generally only been described phenomenologically.

Ashwin and Burylko recently introduced a testable definition of a chimera state in the context of finite networks of indistinguishable phase oscillators---a weak chimera~\cite{Ashwin2014a}. Weak chimeras are defined for oscillators where the phases $\vphi_k\in \Tor=\R/2\pi \Z$ evolve according to
\begin{equation}
\frac{\ud\vphi_k}{\ud t} = \omega - \frac{1}{n} \sum_{j=1}^n H_{kj} g(\vphi_k-\vphi_j)
\label{eq:COsc}
\end{equation}
in terms of partial frequency synchronization on trajectories; here~$H_{kj}$ gives
the network topology (respecting a permutation symmetry that acts transitively on the indices of the oscillators) and~$g(\phi)$ is the generalized coupling (phase interaction) function. Such weak chimeras cannot appear in fully symmetric globally coupled phase oscillator networks or in any system with three or fewer oscillators. For coupling functions with more than one Fourier mode, 
there are examples of weak chimeras in systems of four oscillators that are relative periodic orbits, relative quasiperiodic orbits for six oscillators, and weak chimeras of heteroclinic type in a system of ten oscillators~\cite{Ashwin2014a}. Note that coupling functions with multiple Fourier modes are not necessary for the occurrence of attracting weak chimeras: they can be found even for Kuramoto--Sakaguchi coupling $g(\phi)=\sin(\phi+\alpha)$ in such a system with only four oscillators~\cite{Panaggio2015b}.

However, the weak chimeras in \cite{Ashwin2014a} fail to capture one important dynamical feature expected of chimera states in higher-dimensional phase oscillator network: namely that they are chaotic. Moreover, the definition assumes existence of limiting frequency differences, which may not be the case for general trajectories even for a chaotic invariant set possessing a natural measure. Numerical investigations indicate that the chimeras in certain nonlocally coupled rings of oscillators may exhibit positive maximal Lyapunov exponents~\cite{Wolfrum2011a}---while for attractive coupling, they may appear only as transients for typical initial conditions~\cite{Wolfrum2011b}. By contrast, weak chimeras constructed in~\cite{Ashwin2014a, Panaggio2015b} have Lyapunov exponents that are presumably nonpositive, and can be attractors or repellers. A natural question is whether it is possible find ``chaotic'' weak chimeras in relatively small finite networks of indistinguishable oscillators---are there weak chimeras whose dynamics have positive maximal Lyapunov exponents for typical orbits?

In this paper we construct systems that exhibit such chaotic weak chimera states. It is already known that positive maximal Lyapunov exponents may arise in a fully symmetric phase oscillator system if the coupling function is chosen appropriately~\cite{Bick2011}. We use this to show the existence of weak chimeras for weakly coupled clusters of oscillators with positive Lyapunov exponents in the limit of vanishing coupling. Then we show numerically that the positive Lyapunov exponents persist for nonvanishing coupling. 
Consequently, the study of chaotic weak chimeras sheds some light on the dynamics of regular chimera states; indeed one might conjecture that ``typical'' weak chimeras will be chaotic in all but the smallest and simplest systems.

The paper is organized as follows. In Section~\ref{sec:Persistence} we discuss some results on persistence of invariant sets for general nonautonomously perturbed dynamical systems and their consequences for weakly coupled product systems. In Section~\ref{sec:Prelims} we review and 
make a simple generalization of the definition of a weak chimera state to cases where the frequency difference may not exist on certain trajectories, but may only have upper and lower bounds. In Section~\ref{sec:CWC} we apply the results of Section~\ref{sec:Persistence} to prove the existence of weak chimeras for systems that are weakly coupled populations of phase oscillators. We numerically investigate the maximal Lyapunov exponents of these weak chimeras in Section~\ref{sec:Numerics} and observe that the maximal Lyapunov exponent stays positive for a large (presumably positive measure) set of coupling strengths between clusters. Finally, we give some concluding remarks.

\section{Invariant sets and their persistence under perturbations}
\label{sec:Persistence}

In this section we give a general result for the persistence of absorbing sets under general nonautonomous bounded perturbations. Applied to weakly coupled systems, this yields a persistence result for dynamically invariant sets. We will use this result in the following section to show the existence of weak chimeras. The assumptions we make are sufficient to prove the results but not necessary as far as we can determine.

Some notation we will use throughout this manuscript. Let~$\R$ denote the field of real numbers and~$\Rn$ then $\maxdim$-dimensional
vector space over~$\R$. If $x,y\in\Rn$ then we denote their scalar product by $x\cdot y$. If $F:\Rn\to\R^m$ then let~$F'$ denote the (total) derivative and for  $x:\R\to\Rn, t\mapsto x(t)$ write $\dot x=\frac{\ud x}{\ud t}$. A (time-dependent) smooth vector field 
$f:\Rn\times\R\to\Rn$ defines an ordinary differential equation 
\begin{equation}\label{eq:ODE}
\dot x = f(x, t).
\end{equation}
If the system is autonomous, that is, $f(x, t)=f(x)$, then for $x\in\Rn$ the associated~$\alpha$ and $\omega$-limit sets are closed and invariant with respect to the dynamics.

\subsection{Persistence of absorbing regions}

\begin{defn}
A set~$R\subset\Rn$ with compact smooth boundary is \emph{forward absorbing} for~\eqref{eq:ODE} if there is a differentiable function $W:\Rn\to\R$ such that $\partial R = \set{x\in\Rn}{W(x)=0}$ and $W'(x)\cdot f(x, t)<0$ for all $x\in\partial R$, $t\in\R$. A set is \emph{backward absorbing} if it is forward absorbing for~$-f$.
\end{defn}

In other words, an absorbing set has a boundary given by the zeros of some differentiable function and the vector field~$f(x, t)$ points either inward or outward everywhere on this boundary. Clearly, if~\eqref{eq:ODE} is autonomous and~$R$ is a forward or backward absorbing set then~$R$ contains an invariant set of the dynamics.

We now show persistence of absorbing regions for an autonomous system
\begin{align}
\label{eq:aut}
\dot{x}&=f(x)
\end{align}
subject to bounded nonautonomous perturbations. More precisely, consider
\begin{align}
\label{eq:nonaut}
\dot{x}&=f(x)+\e g(x,t)
\end{align}
with~$\e\geq 0$ and $g:\Rn\times\R\to\Rn$ smooth with $\norm{g(x,t)}<M<\infty$ for 
all~$t>0$ and $x\in\Rn$. 

\begin{lem}\label{lem:AbsorbingRegion}
Suppose that~$R$ is forward absorbing for $\e=0$. Then there is an $\e_0>0$ such that whenever $0\leq\e<\e_0$ the set $R$ is forward absorbing for the dynamics of \eqref{eq:nonaut}.
\end{lem}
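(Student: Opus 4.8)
The plan is to exploit the compactness of $\partial R$ in order to turn the pointwise strict inequality $W'(x)\cdot f(x)<0$ into a uniform gap, and then to absorb the order-$\e$ perturbation term into that gap. First I would note that $\partial R=\set{x\in\Rn}{W(x)=0}$ is compact (by hypothesis it is a compact smooth boundary, so $W$ may be taken with continuous derivative on a neighbourhood of it), so the continuous function $x\mapsto W'(x)\cdot f(x)$ attains its maximum on $\partial R$; since this function is strictly negative there, that maximum is some number $-\delta$ with $\delta>0$, i.e.\ $W'(x)\cdot f(x)\le-\delta$ for all $x\in\partial R$. Likewise $x\mapsto\norm{W'(x)}$ is continuous on the compact set $\partial R$ and hence bounded by some constant $K<\infty$; note $K>0$, since otherwise $W'$ would vanish somewhere on $\partial R$, contradicting $W'(x)\cdot f(x)<0$.

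Next I would estimate, for any $x\in\partial R$ and any admissible $t$, using the Cauchy--Schwarz inequality together with the uniform bound $\norm{g(x,t)}<M$,
\[
W'(x)\cdot\bigl(f(x)+\e g(x,t)\bigr)=W'(x)\cdot f(x)+\e\,W'(x)\cdot g(x,t)\le -\delta+\e K M.
\]
Then I would set $\e_0:=\delta/(KM)>0$ (this is well defined since $K>0$ and $M>0$). For $0\le\e<\e_0$ we have $\e KM<\delta$, so the right-hand side above is strictly negative: $W'(x)\cdot(f(x)+\e g(x,t))<0$ for every $x\in\partial R$ and every $t$. Since the perturbation leaves $W$ unchanged, the zero set $\partial R=\set{x\in\Rn}{W(x)=0}$ is the same as for $\e=0$, so by definition $R$ is forward absorbing for~\eqref{eq:nonaut}, which is the claim.

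There is no real obstacle here: the whole argument rests on (i) the compactness of $\partial R$, which simultaneously supplies the uniform gap $\delta$ and the bound $K$ on $\norm{W'}$, and (ii) the uniform-in-$(x,t)$ bound on $g$ — without either, no single threshold $\e_0$ could be produced. The only points requiring a word of care are that $W$ be smooth enough for $W'$ to be continuous on $\partial R$ (implicit in the ``compact smooth boundary'' hypothesis) and that the range of $t$ over which $\norm{g(x,t)}<M$ is assumed matches the range of $t$ appearing in the definition of forward absorbing; neither of these affects the estimate above.
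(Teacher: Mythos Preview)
Your proof is correct and follows essentially the same route as the paper: use compactness of $\partial R$ to obtain a uniform gap $-\delta$ for $W'(x)\cdot f(x)$ and a bound $K$ on $\norm{W'}$, then choose $\e_0=\delta/(KM)$ so that the Cauchy--Schwarz estimate $\e\,W'(x)\cdot g(x,t)\le \e KM<\delta$ closes the argument. The paper's version is slightly terser (it writes the perturbation term as $\e\max_{x\in\partial R,\,t}\,W'(x)\cdot g(x,t)$ before bounding it by $\e M\max_{x\in\partial R}\norm{W'(x)}$), but the content is identical.
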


\begin{proof}
We just have to check that $W'(x)\cdot (f(x)+\e g(x,t))<0$ for all $x\in \partial R$ and $t>0$. Since $W'(x)\cdot f(x)$ is negative on the compact surface~$\partial R$ it has a lower bound, i.e., there is a $\xi>0$ such that $W'(x)\cdot f(x)<-\xi$ for all $x\in \partial R$. Thus we have for $x\in\partial R$
\[W'(x)\cdot(f(x)+\e g(x,t)) \leq -\xi + \e \max\set{W'(x)\cdot g(x,t)}{x\in \partial R, t\in\R}\]
so if we pick
\[
\e_0= \frac{\xi}{\max \tset{M\norm{W'(x)}}{x\in \partial R}}
\]
then for each $\e<\e_0$ and $t>0$ we have $W'(x)\cdot(f(x)+\e g(x,t))<0$ as required.
\end{proof}

\begin{defn}
Let~$A$ be a compact invariant set for the dynamics of~\eqref{eq:aut}.
\begin{enumerate}[(a)]
\item
A nonnegative differentiable function $V:\Rn\to\R$ is a \emph{Lyapunov function for~$A$} if $V(x) = 0$ for all $x\in A$ and $\dot{V}(x) = V'(x)\cdot f(x)<0$ for $x\not\in A$.
\item If~$V$ is a Lyapunov function for~$A$ then we call~$A$ \emph{sufficiently stable}.
\item If $V$ is a Lyapunov function for~$A$ for $-f$ then we call~$A$ \emph{sufficiently unstable}.
\end{enumerate}
\end{defn}

One can check that sufficient stability of a set implies asymptotic stability, though the converse only holds under additional assumptions; see for example \cite{Michel2015}. 

We now show that Lyapunov functions give absorbing regions arbitrarily close to a compact invariant set~$A$ of the unperturbed dynamics as long as the perturbation is sufficiently small. We write $B_{\delta}(A)$ to denote a $\delta$-neighborhood of~$A$.

\begin{prop}
\label{thm:NonautPerturb}
Suppose that~$A$ is a compact invariant sufficiently stable (or unstable) set for the unperturbed dynamics~\eqref{eq:aut} and $\norm{g(x,t)}<M$ for all~$x\in\Rn$ and $t>0$. For any $\delta>0$ there is a 
compact set~$R$ with 
\[
A\subset R \subset B_{\delta}(A)
\] 
and an~$\e_0$ such that whenever $0\leq\e<\e_0$ the set~$R$ is an absorbing region for the dynamics of the perturbed system~\eqref{eq:nonaut}.
\end{prop}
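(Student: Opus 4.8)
The plan is to use a sublevel set of the Lyapunov function $V$, localized to a small neighborhood of $A$, as the absorbing region $R$, and then to conclude immediately from Lemma~\ref{lem:AbsorbingRegion}. First I would record that $V>0$ off $A$: a zero of the nonnegative differentiable function $V$ outside $A$ would be an interior minimum, hence a critical point, contradicting $\dot V=V'\cdot f<0$ there. Now fix $\bar\delta\in(0,\delta)$. The set $S:=\set{x\in\Rn}{\mathrm{dist}(x,A)=\bar\delta}$ is compact and disjoint from $A$, so $m:=\min_S V>0$; by Sard's theorem pick a regular value $c\in(0,m)$ of $V$ and put
\[
R:=\set{x\in\overline{B_{\bar\delta}(A)}}{V(x)\le c}.
\]
Since $V\ge m>c$ on $S=\partial B_{\bar\delta}(A)$, the compact set $R$ is disjoint from $S$, hence $R\subset B_{\bar\delta}(A)$; thus $A\subset R\subset B_{\bar\delta}(A)\subset B_{\delta}(A)$, and $\partial R=\set{x\in B_{\bar\delta}(A)}{V(x)=c}$ is a compact smooth hypersurface because $c$ is a regular value.

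Next I would turn $R$ into an absorbing region in the precise sense of the definition by exhibiting a global defining function $W$; the point is that $V-c$ may have further zeros outside $B_{\bar\delta}(A)$, so a cutoff is needed. Since $R$ is a compact subset of the open set $B_{\bar\delta}(A)$, choose radii $0<\bar\delta_1<\bar\delta_2<\bar\delta$ with $R\subset B_{\bar\delta_1}(A)$ and a smooth $\psi\colon\Rn\to[0,1]$ with $\psi\equiv1$ on $B_{\bar\delta_1}(A)$ and $\psi\equiv0$ off $B_{\bar\delta_2}(A)$, and set $W:=\psi\,(V-c)+(1-\psi)$. On $B_{\bar\delta_1}(A)$ one has $W=V-c$, and since $\partial R\subset B_{\bar\delta_1}(A)$, where $\psi$ is constant, also $W'=V'$ on $\partial R$. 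Off $B_{\bar\delta_1}(A)$ one checks $W>0$: either $\mathrm{dist}(\cdot,A)\ge\bar\delta_2$ and $W\equiv1$, or $\mathrm{dist}(\cdot,A)<\bar\delta_2<\bar\delta$ and the point is not in $R$, hence $V>c$ and $W=\psi(V-c)+(1-\psi)$ is a convex combination of positive numbers. Therefore $\set{x\in\Rn}{W(x)=0}=\partial R$, and for $x\in\partial R$ (which avoids $A$, as $V=c>0$ there) we get $W'(x)\cdot f(x)=V'(x)\cdot f(x)<0$. So $R$ is forward absorbing for $\e=0$.

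It then remains to invoke Lemma~\ref{lem:AbsorbingRegion}: since $\norm{g(x,t)}<M$, it provides an $\e_0>0$ such that $R$ is forward absorbing for $\dot x=f(x)+\e g(x,t)$ whenever $0\le\e<\e_0$, which settles the sufficiently stable case. For the sufficiently unstable case, $V$ is a Lyapunov function for $A$ with respect to $-f$, so the identical construction yields the same $R$, now forward absorbing for $-f$; applying Lemma~\ref{lem:AbsorbingRegion} to the field $-f$ with perturbation $-g$ (still bounded by $M$) makes $R$ forward absorbing for $-(f+\e g)$, i.e.\ backward absorbing for the perturbed system. In either case $R$ is an absorbing region.

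I do not expect a serious obstacle here: the only real care needed is the bookkeeping required to meet the definition of an absorbing set --- a compact boundary close to $A$, which forces the localization to $\overline{B_{\bar\delta}(A)}$ because the global sublevel sets of $V$ need not be bounded, and a single differentiable function whose zero set is exactly $\partial R$, which forces the cutoff $\psi$. I would also note that on a compact phase space --- such as the torus $\Torn$ relevant to coupled phase oscillators --- this is cleaner still, since $V^{-1}([0,c])$ is then automatically compact and lies inside $B_{\delta}(A)$ for all small $c$, so one may simply take $R=V^{-1}([0,c])$ with $c$ a regular value.
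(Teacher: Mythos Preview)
Your proof is correct and follows the same strategy as the paper's: take a sublevel set of the Lyapunov function $V$ as the absorbing region and then invoke Lemma~\ref{lem:AbsorbingRegion}. The paper's proof simply sets $R_\eta=\{V\le\eta\}$ and asserts $R_\eta\subset B_\delta(A)$ for small $\eta$ without further comment, whereas you are more careful about the technicalities (localizing to ensure compactness on $\Rn$, choosing a regular value for a smooth boundary, and building a global defining function $W$); these refinements fill in details the paper leaves implicit but do not change the underlying argument.
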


\begin{proof}
Sufficient persistence implies that there is a Lyapunov function~$V$.
For $\eta>0$ define
\[
R_\eta:= \set{x\in \Rn}{V(x)\leq\eta}.
\]
For given $\delta>0$ choose~$\eta>0$ such that 
$A\subset R_{\eta} \subset B_{\delta}(A)$. 
Since $V$ is a Lyapunov function the set~$R=R_{\eta}$ is an absorbing region for the dynamics of~\eqref{eq:aut}. Lemma~\ref{lem:AbsorbingRegion} now implies that there is 
an~$\e_0$ such that~$R$ is also absorbing for the dynamics 
of~\eqref{eq:nonaut}.
\end{proof}

\subsection{Persistence of invariant sets in product systems}

Nonautonomous perturbations may arise from weak coupling of two dynamical systems.

\subsubsection{Weak forcing respecting an invariant set}

Let $x\in\Rno$, $y\in\Rnt$ and consider the product system
\begin{equation}
\label{eq:coupledforced}
\begin{aligned}
\dot{x}&=f(x)+ \e g(x,y)\\
\dot{y}&=h(x,y).
\end{aligned}
\end{equation}
Suppose that $V\subset\Rnt$ is a compact set such that $\Rno\times V$ is dynamically invariant for all $\e\geq 0$.

\begin{thm}
\label{thm:PersistenceFactor}
Let $A\subset\Rno$ be a compact and sufficiently stable (or sufficiently unstable) set for $\dot{x}=f(x)$ and suppose $\norm{g(x,y)}\leq M<\infty$ for all $(x,y)$. For any $\delta>0$ there exists an $\e_0>0$ such that whenever $0\leq\e<\e_0$ there is a dynamically invariant set $A_{\e} \subset B_\delta(A)\times V$ of the perturbed system~\eqref{eq:coupledforced}.
\end{thm}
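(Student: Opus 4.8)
The plan is to treat the coupling term $\e g(x,y)$, as seen by the $x$-subsystem, as a bounded nonautonomous perturbation of precisely the type handled in Lemma~\ref{lem:AbsorbingRegion} and Proposition~\ref{thm:NonautPerturb}: the only property of the perturbation used in those proofs is the uniform bound $\norm{g}<M$ on a compact boundary surface, and nothing in those arguments changes if the perturbation is allowed to depend on the auxiliary variable $y\in\Rnt$ instead of on the time $t$. So first I would use sufficient stability of $A$ to fix a Lyapunov function $L:\Rno\to\R$ for $A$ (written $L$, since $V$ already names the compact set in $\Rnt$), and then run the argument of Proposition~\ref{thm:NonautPerturb} with $g(x,y)$ in the role of $g(x,t)$. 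This produces, for the prescribed $\delta>0$, an $\eta>0$ for which the compact sublevel set $R:=\set{x\in\Rno}{L(x)\leq\eta}$ satisfies $A\subset R\subset B_\delta(A)$, together with an $\e_0>0$ such that $L'(x)\cdot(f(x)+\e g(x,y))<0$ for every $x\in\partial R$, every $y\in\Rnt$, and every $\e\in[0,\e_0)$.

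The second step is to show that the compact set $R\times V\subset\Rno\times\Rnt$ is forward invariant under the (autonomous) flow of~\eqref{eq:coupledforced} for every $\e\in[0,\e_0)$. Let $(x(t),y(t))$ be a solution with $(x(0),y(0))\in R\times V$. As long as it is defined, invariance of $\Rno\times V$ forces $y(t)\in V$; and a barrier argument forces $x(t)\in R$: if $t_0>0$ were the first time with $L(x(t_0))=\eta$ then $\tfrac{\ud}{\ud t}L(x(t))\big|_{t=t_0}\geq0$, contradicting the inequality of the first step, which applies because $x(t_0)\in\partial R$ and $y(t_0)\in V\subset\Rnt$. Hence the solution stays in the compact set $R\times V$, so it is defined for all $t\geq0$ and $(x(t),y(t))\in R\times V$ throughout.

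Finally, a nonempty compact forward-invariant set of an autonomous flow always contains a nonempty compact invariant set --- for instance the $\omega$-limit set of any of its points, or the maximal invariant subset $\bigcap_{t\geq0}\Phi_t(R\times V)$ --- and I would take $A_\e$ to be such a set, giving $A_\e\subset R\times V\subset B_\delta(A)\times V$ as required. The sufficiently unstable case follows by applying the same argument to the time-reversed system $\dot x=-f(x)-\e g(x,y)$, $\dot y=-h(x,y)$, for which $\Rno\times V$ is still invariant and $A$ is now sufficiently stable. I do not anticipate a real obstacle: the crux is simply the observation that the proof of Lemma~\ref{lem:AbsorbingRegion} never uses time-dependence of the perturbation, only its uniform bound, so it transfers verbatim with $y$ playing the role of the perturbation variable. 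The one point that needs care is the barrier step, where it matters that $R$ is a sublevel set of $L$ (so the inward-pointing condition on $\partial R$ genuinely yields forward invariance) and that the $y$-direction requires no separate estimate because $\Rno\times V$ is invariant by hypothesis.
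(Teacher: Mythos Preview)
Your proposal is correct and follows essentially the same approach as the paper: apply Proposition~\ref{thm:NonautPerturb} (re-interpreting the bounded perturbation as depending on~$y$ rather than~$t$) to obtain an absorbing sublevel set $R\subset B_\delta(A)$, observe that $R\times V$ is forward invariant, and then take an $\omega$-limit set (respectively $\alpha$-limit set in the sufficiently unstable case) to extract~$A_\e$. The paper's proof is terser and leaves the barrier argument and the passage from $t$-dependence to $y$-dependence implicit, but the logic is identical.
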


\begin{proof}
For given $\delta >0$, Proposition~\ref{thm:NonautPerturb} yields an~$\e_0$ and $R\subset B_\delta(A)$ such that~$R$ is absorbing for all $0\leq\e<\e_0$.

Suppose that~$A$ is sufficiently stable. The $\omega$-limit set $A_{\e}=\omega(x, y)$ of $(x,y)\in R\times V$ is dynamically invariant and we have $D\subset B_\delta(A)\times V$ since~$R$ is absorbing. If~$A$ is sufficiently unstable take the $\alpha$-limit set instead.
\end{proof}

\subsubsection{Weakly coupled product systems}

We now present a similar result for weakly coupled systems, 
\begin{equation}
\label{eq:coupled}
\begin{aligned}
\dot{x}&=f_1(x)+\e g_1(x,y)\\
\dot{y}&=f_2(y)+\e g_2(x,y)
\end{aligned}
\end{equation}
with $x\in\Rno$, $y\in\Rnt$ and $f_\ell,g_\ell$ smooth such that $\norm{g_\ell}<M<\infty$, $\ell=1, 2$. We refer to $\dot{x}=f_1(x)$ and $\dot{y}=f_2(y)$ as the \emph{uncoupled factors} of~\eqref{eq:coupled}.

\begin{thm}\label{thm:Products}
Suppose that the uncoupled factors of \eqref{eq:coupled} have sufficiently stable attractors $A_1$ and $A_2$. Let $A=A_1\times A_2$. Then for any $\delta>0$ there exists an~$\e_0>0$ such that for all $0\leq \e<\e_0$ there is an invariant set $A_\e\subset B_\delta(A)$ for the dynamics of~\eqref{eq:coupled}. The same holds if~$A_1$ and~$A_2$ are sufficiently unstable. 
\end{thm}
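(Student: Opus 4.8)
The plan is to reduce Theorem~\ref{thm:Products} to Theorem~\ref{thm:PersistenceFactor} by treating one factor's equation as a weak nonautonomous forcing of the other. First I would observe that the system \eqref{eq:coupled} has exactly the shape of \eqref{eq:coupledforced} once we set the forcing term to be the full second equation: writing $h(x,y) = f_2(y)+\e g_2(x,y)$, the $y$-equation is $\dot y = h(x,y)$ and the $x$-equation is $\dot x = f_1(x)+\e g_1(x,y)$ with $\norm{g_1}<M$. The only hypothesis of Theorem~\ref{thm:PersistenceFactor} still to be supplied is a compact set $V\subset\Rnt$ with $\Rno\times V$ dynamically invariant for all $\e\geq 0$; this is the role that $A_2$ cannot directly play, since $A_2$ is invariant for $\dot y=f_2(y)$ but need not be invariant once the $\e g_2$ coupling is switched on.

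The main obstacle is therefore producing such a forward-invariant $V$ close to $A_2$, uniformly in $\e$ for small $\e$. The natural fix is to use the sufficient stability of $A_2$: by Proposition~\ref{thm:NonautPerturb} applied to the factor $\dot y = f_2(y)+\e g_2(x,y)$ (regarding the $x$-dependence as a bounded nonautonomous perturbation, legitimate because $\norm{g_2}<M$), for the given $\delta>0$ there is a compact absorbing region $R_2$ with $A_2\subset R_2\subset B_{\delta'}(A_2)$ and an $\e_0^{(2)}>0$ so that $R_2$ is forward absorbing for all $0\leq\e<\e_0^{(2)}$; here I would pick $\delta'$ small enough that $B_{\delta'}(A_1)\times B_{\delta'}(A_2)\subset B_\delta(A)$. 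A forward absorbing set is in particular forward invariant, so we may take $V=R_2$, and then $\Rno\times V$ is forward invariant for \eqref{eq:coupled} for all $\e<\e_0^{(2)}$, which is exactly the structural hypothesis needed.

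With $V=R_2$ fixed, I would then apply Theorem~\ref{thm:PersistenceFactor} to the pair $(f_1, g_1)$ together with the bound $\norm{g_1}<M$ and the sufficiently stable attractor $A_1$: for the chosen $\delta'$ it yields an $\e_0^{(1)}>0$ and a dynamically invariant set $A_\e\subset B_{\delta'}(A_1)\times V\subset B_{\delta'}(A_1)\times B_{\delta'}(A_2)\subset B_\delta(A)$ for all $0\leq\e<\e_0^{(1)}$. Setting $\e_0=\min\tsset{\e_0^{(1)},\e_0^{(2)}}$ gives the claim in the sufficiently stable case. For the sufficiently unstable case one runs the identical argument with $-f_1$, $-f_2$ in place of $f_1,f_2$ (equivalently, reversing time), using $\alpha$-limit sets where $\omega$-limit sets were used; since all the auxiliary results (Lemma~\ref{lem:AbsorbingRegion}, Proposition~\ref{thm:NonautPerturb}, Theorem~\ref{thm:PersistenceFactor}) are stated for both stable and unstable variants, no new work is required. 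One routine point I would double-check is that the construction of $R_2$ via Proposition~\ref{thm:NonautPerturb} genuinely only uses boundedness of the perturbation and not its autonomy, so that the $x$-dependent forcing $g_2(x,y)$ causes no difficulty; this is immediate from the proof of that proposition, which only invokes $\norm{g}<M$.
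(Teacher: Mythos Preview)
Your argument is correct and gives a genuinely different route from the paper's. The paper proceeds directly: it combines the two Lyapunov functions into the single product Lyapunov function $V(x,y):=V_1(x)+V_2(y)$ on $\Rno\times\Rnt$ and reruns the argument of Proposition~\ref{thm:NonautPerturb} (hence Lemma~\ref{lem:AbsorbingRegion}) for this $V$ to produce one absorbing region $R$ with $A_1\times A_2\subset R\subset B_\delta(A_1\times A_2)$ for the full coupled system. Your approach is instead modular, factoring through Theorem~\ref{thm:PersistenceFactor} after first manufacturing the compact forward-invariant set $V=R_2$ via Proposition~\ref{thm:NonautPerturb} applied to the second factor. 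The paper's route is slightly shorter and, as a by-product, exhibits a Lyapunov function for the product set itself; yours avoids redoing any estimates and simply chains the already-proved results. One small technical point worth flagging in your version: Theorem~\ref{thm:PersistenceFactor} as stated asks that $\Rno\times V$ be invariant for \emph{all} $\e\geq 0$, whereas your $R_2$ is only invariant for $\e<\e_0^{(2)}$. This is harmless---the proof of that theorem uses invariance only for the particular $\e$ under consideration---but you should say so explicitly rather than invoke the theorem as a black box.
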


\begin{proof}
We repeat the same argument as for Theorem~\ref{thm:NonautPerturb} 
for the Lyapunov function $V(x,y):=V_1(x)+V_2(y)$ to show that 
for any $\delta>0$ there is an~$\e_0$ and an absorbing region~$R$ 
with
\[
A_1\times A_2 \subset R \subset B_{\delta}(A_1\times A_2).
\]
for the dynamics of \eqref{eq:coupled} for all $0\leq\e<\e_0$.
\end{proof}

\begin{rem}
Since Theorems~\ref{thm:PersistenceFactor} and~\ref{thm:Products} are local results, they clearly generalize the case where the assumptions are satisfied on a sufficiently large neighborhood of the set of interest.

In fact, the existence of absorbing regions $R_1\supset A_1$, $R_2\supset A_2$ 
suffices to show the existence of an invariant set $A_{\e}\subset R_1\times R_2$
for sufficiently small~$\e$.
By contrast, a Lyapunov function allows one to construct invariant sets arbitrarily close to the product $A_1\times A_2$.
\end{rem}

\subsubsection{Dynamics on the invariant sets}
Note that the dynamics on $A_\e$ described in Theorems~\ref{thm:PersistenceFactor} and~\ref{thm:Products} may be qualitatively very different than the dynamics on~$A$. In fact, even if one assumes appropriate contraction and expansion properties such that~$A$ is uniformly normally hyperbolic and~$A_\e$ is the invariant set that arises when the vector field is perturbed, one cannot expect that the dynamics on~$A$ and $A_\e$ are conjugate~\cite{Fenichel1972}.

Persistence relates to the upper semicontinuity of attractors. In fact, many attractors of dynamical systems will be persistent in the sense of upper semicontinuity of the attracting set to a wide class of perturbations that may include nonautonomous perturbations. A more precise statement of this requires framing assumptions on the properties of the attractor and of the class of perturbations. For example, \cite{Afraimovich1998} give some conditions that ensure persistence of attractors to nonautonomous perturbations, with applications to synchronization problems, while semicontinuity of attractors to discretization is clearly an important topic in the study of numerical approximations of dynamical systems \cite{Stuart1994}.

\section{Weak chimeras for coupled phase oscillators}
\label{sec:Prelims}

\subsection{Frequency synchronization and weak chimeras}

We now consider the dynamics of~$\maxdim\in\N$ phase oscillators 
where oscillator~$k$ is characterized by its state~$\vphi_k\in\Tor$. 
Rather than restricting to systems of the form \eqref{eq:COsc}, we 
first consider a more general setting.
Let~$F=(F_1, \dots, F_\maxdim)$ be a smooth vector field on the
torus~$\Torn$. The evolution of the $k$th oscillator is given by
\begin{equation}\label{eq:Dyn}
\dot\vphi_k = F_k(\vphi_1,\ldots,\vphi_n).
\end{equation}

\subsubsection{Average angular frequency intervals and weak chimeras}

For any $\vphi^0\in\Torn$ and $T>0$ let us define the {\em average angular frequency of oscillator~$k$ on the time interval} $[0, T]$ by
\[
\langle F_k \rangle_{T, \vphi^0} := \frac{1}{T}\int_{0}^{T} F_k(\vphi(t))\ud t,
\] 
where $\vphi(t)$ is the solution of~\eqref{eq:Dyn} with initial condition $\vphi(0)=\vphi^0$. More generally, for $A\subset \Torn$ a compact and forward invariant set under the flow determined by~$F$ we define
\begin{align}
\Oml_k(F, A) &:= \inf_{\vphi\in A}\liminf_{T\to\infty}\langle F_k \rangle_{T, \vphi},\\
\Omu_k(F, A) &:= \sup_{\vphi\in A}\limsup_{T\to\infty}\langle F_k \rangle_{T, \vphi}
\end{align}
to describe the minimal and maximal average frequency on~$A$. Similarly, define the angular frequency differences
\begin{align}
\OmlD{k;j}(F, A) & := \inf_{\vphi\in A}\liminf_{T\to\infty} \big(\langle F_k \rangle_{T, \vphi}-\langle F_j \rangle_{T, \vphi}\big),\\
\OmuD{k;j}(F, A) & := \sup_{\vphi\in A}\limsup_{T\to\infty}\big(\langle F_k \rangle_{T, \vphi}-\langle F_j \rangle_{T, \vphi}\big)
\end{align}
where we use the subscript $\text{df}(k;j)$ to indicate the frequency difference between oscillators~$k$ and~$j$.

\begin{rem}
If~$F$ describes the dynamics of weakly coupled phase oscillators then $\Oml_k(F, A), \Omu_k(F, A)$ converge under fairly weak assumptions on smoothness of the dynamics~\cite{Karabacak2009}. There are various alternative ways to express the interval of angular frequency differences, for
example one can use a continuous version of~\cite[Prop.~2.1]{Jenkinson2006} to write
\begin{align}
\OmlD{k;j}(F, A) & = \inf_{\mu\in M_A}\int \langle F_k \rangle_{T, \vphi}-\langle F_j \rangle_{T, \vphi}\, d\mu,\\
\OmuD{k;j}(F, A) & = \sup_{\mu\in M_A}\int \langle F_k \rangle_{T, \vphi}-\langle F_j \rangle_{T, \vphi}\, d\mu.
\end{align}
where~$M_A$ is the set of ergodic probability measures invariant under the flow generated by $F$ that are supported on~$A$.
\end{rem}

\begin{defn}
The \emph{average angular frequency interval} of the $k$th oscillator on~$A$ is given by
\begin{equation}\label{eq:Freqs}
\Omk(F, A) := \left[\Oml_k(F, A), \Omu_k(F, A)\right]\subset\R
\end{equation}
and the \emph{average angular frequency difference interval} between
oscillators $k, j$ is given by
\begin{equation}\label{eq:FreqDiffs}
\OmD{k;j}(F, A) := \left[\OmlD{k;j}(F, A), \OmuD{k;j}(F, A)\right]\subset\R.
\end{equation}
We say the oscillators $k, j$ are \emph{frequency synchronized} on~$A$ if 
\[\OmD{k;j}(F, A) = \sset{0}.\]
\end{defn}

Note that even if $\Omk(F, A)$ and $\Omj(F, A)$ are intervals with interior, it is possible that oscillators $k, j$ are frequency synchronized. We now define weak chimeras by frequency synchronization---this is a generalization of the definition in \cite{Ashwin2014a}.

\begin{defn}
A compact, connected, chain-recurrent forward-invariant set~$A$ is 
a \emph{weak chimera} if for there are distinct oscillators 
$k, j, l$ such that
\begin{align} 
\OmD{k;j}(F, A) &= \sset{0},\\
\Om_l(F, A)\cap\Omk(F, A)&=\emptyset.
\end{align}
\end{defn}

As in~\cite{Ashwin2014a} we assume minimal conditions on~$A$ that are satisfied if it is an $\omega$-limit set for the dynamics. The definition in \cite{Ashwin2014a} can be seen a special case of this definition in the case that there is convergence of the average frequency differences: $\OmD{k;j}(F, A)=\OmD{k;j}(F, A)$.

\subsubsection{Average angular frequencies under perturbations}
What is the effect of a bounded perturbation on the average frequencies of oscillators with dynamics given by~\eqref{eq:Dyn}? More precisely, if $\e>0$, $M \geq 0$, $Y_k:\R\to\R$ is smooth with $\abs{Y_k(t)}\leq M<\infty$ for $k\in\sset{1, \dotsc, \maxdim}$ and all $t>0$, we are interested in the average angular frequencies of the dynamics determined by the vector field 
\begin{align}
\label{eq:Feps}
\dot{\vphi}_k = F^{(\e)}_k(\vphi, t) =  F_k(\vphi) + \e Y_k(t).
\end{align}

We now give a (very) approximate bound for the 
average angular frequencies for dynamics with respect to the 
perturbed vector field~$F^{(\e)}$. For $A\subset\Torn$ 
define the interval
\begin{equation}
\label{eq:Bound}
N_k(F, A) := \Big[\inf_{x\in A}F_k(x), \sup_{x\in A}F_k(x)\Big].
\end{equation} 
Note that if~$A$ is compact, so is~$N_k(F, A)$.

\begin{lem}\label{lem:CloseCoarseBound}
Let~$A$ be a compact dynamically invariant set for~\eqref{eq:Feps} 
with $\e=0$ and let $\delta\geq 0$, $\e_0>0$ be given. Then there 
is an $\eta\geq 0$ such that for any $0\leq\e<\e_0$, if there is a 
compact dynamically invariant set $A_\e\subset B_\delta(A)$ 
for~\eqref{eq:Feps} then 
\begin{align*}
\Omk(F^{(\e)}, A_\e)&\subset B_{\eta}(N_k(F, B_\delta(A))).
\end{align*}
\end{lem}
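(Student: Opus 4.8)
The plan is to control the time-averaged frequency $\langle F^{(\e)}_k\rangle_{T,\vphi}$ for trajectories $\vphi(t)$ that remain in $A_\e\subset B_\delta(A)$, and to do so uniformly in $\e<\e_0$. First I would split the average:
\[
\langle F^{(\e)}_k\rangle_{T,\vphi} = \frac{1}{T}\int_0^T F_k(\vphi(t))\,\ud t + \frac{\e}{T}\int_0^T Y_k(t)\,\ud t.
\]
Since $\vphi(t)\in B_\delta(A)$ for all $t$, the first term lies in the interval $N_k(F, B_\delta(A))$ (its value is a time-average of values of the continuous function $F_k$ over the compact set $\overline{B_\delta(A)}$, hence lies between the infimum and supremum of $F_k$ there; one should use $\overline{B_\delta(A)}$ to keep things compact, which is harmless). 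The second term is bounded in absolute value by $\e M \le \e_0 M$, uniformly in $T$. Hence every finite-time average lies in the $\e_0 M$-neighborhood of $N_k(F, B_\delta(A))$, and this property passes to the $\liminf$ and $\limsup$ as $T\to\infty$, and then to the infimum/supremum over $\vphi\in A_\e$ in the definition of $\Oml_k, \Omu_k$. Setting $\eta := \e_0 M$ gives exactly $\Omk(F^{(\e)}, A_\e)\subset B_\eta(N_k(F, B_\delta(A)))$.

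The one point requiring a little care is that the statement quantifies $\eta$ before $\e$ (``there is an $\eta\ge 0$ such that for any $0\le\e<\e_0$\ldots''), so $\eta$ must be chosen depending only on $\e_0$, $M$ and the data $F, A, \delta$ — which the choice $\eta = \e_0 M$ respects, since the bound $\e M < \e_0 M$ is uniform over the allowed range of $\e$. A second minor subtlety is that the estimate must hold for whatever invariant set $A_\e$ happens to exist inside $B_\delta(A)$: but the argument only uses that orbits in $A_\e$ stay inside $B_\delta(A)$, which is immediate from $A_\e\subset B_\delta(A)$ together with forward invariance of $A_\e$.

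I do not anticipate a serious obstacle here; the result is essentially a uniform-in-time crude estimate. The only thing to be slightly attentive to is the compactness bookkeeping (replacing $B_\delta(A)$ by its closure so that $N_k$ is a genuine compact interval, as already noted in the paragraph preceding the lemma) and the interplay of the nested $\inf$/$\liminf$ (and $\sup$/$\limsup$) in the definition of $\Omk$ with the pointwise-in-$T$ bound. Since a closed neighborhood of a compact set is closed, and both $\liminf$ and $\limsup$ of a sequence lying in a closed set remain in that set, the passage to limits is automatic. Thus the proof is short: bound the two pieces of the average, take limits, take extrema over $A_\e$.
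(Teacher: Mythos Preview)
Your proposal is correct and matches the paper's proof essentially line for line: split the average into the $F_k$-part (bounded via $\sup/\inf$ of $F_k$ on $B_\delta(A)$) and the $\e Y_k$-part (bounded by $\e_0 M$), then set $\eta=\e_0 M$ and pass to the limits. The paper is slightly terser---it only writes out the upper bound and notes the lower bound is analogous---but your extra care about closures and the order of quantifiers is harmless elaboration, not a different route.
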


\begin{proof}
The statement follows from explicit integral estimates; we give here the 
upper bound---the lower bound is obtained analogously.

If~$\vphi(t)$ is a solution of~\eqref{eq:Feps} with 
$\vphi(0)=\vphi^0\in A_\e$ set $D=\set{\vphi(t)}{t\geq 0}$.
For any $T>0$ we have
\begin{align*}
\langle F^{(\e)}\rangle_{T, \vphi^0} &= \frac{1}{T}\int_{0}^{T}F^{(\e)}_k(\vphi(t))\ud t
= \frac{1}{T}\int_{0}^{T}F_k(\vphi(t)) + \e Y_k(t)\ud t\\
& \leq \sup_{\psi\in D} {F_k(\psi)} + \e M \\
& \leq \sup_{\psi\in {B_\delta(A)}} {F_k(\psi)} + \e_0 M.
\end{align*}
Thus, for $\eta=\e_0 M$ we have 
\[\Omu_k(F^{(\e)}, A_\e)=\sup_{\vphi^0\in A_\e}\limsup_{T\to\infty}\langle F^{(\e)}\rangle_{T, \vphi^0}\leq \sup_{\psi\in {B_\delta(A)}} {F_k(\psi)} + \eta\]
which proves the assertion.
\end{proof}

\begin{rem}
In particular, Lemma~\ref{lem:CloseCoarseBound} implies that 
$\Omega_k(F,A)\subset N_k(F,A)$. While Lemma~\ref{lem:CloseCoarseBound} 
suffices for our purposes, using continuity of~$F$ one can prove a 
stronger statement:
for any $\eta>0$ there exists a $\delta>0$ and $\e_0>0$ such that 
for any compact and invariant~$A_\e\subset B_\delta(A)$ 
for~\eqref{eq:Feps} with $0\leq\e<\e_0$ 
we have
$\Omk(F^{(\e)}, A_\e)\subset B_{\eta}(N_k(F, A))$.

Moreover, if one assumes ergodicity and the existence of suitable invariant measures then the time averages may be replaced by spatial averages.
If in addition the measure deforms nicely under the perturbation of the vector field then we can obtain better approximation of the frequencies of the perturbed system than the coarse bound given in Lemma~\ref{lem:CloseCoarseBound}.
\end{rem}

\subsection{Networks of globally coupled phase oscillators}

The notion of frequency synchronization and weak chimeras applies to systems of identical and diffusively coupled phase oscillators. We define
$\Th:\Torn\times\Torn\to\Rn$ by
\begin{equation}
\label{eq:Theta}
\Th_k(\vphi, \psi) := \frac{1}{\maxdim}\sum_{j=1}^{\maxdim}g(\vphi_k-\psi_j).
\end{equation}
where $g:\Tor\to\R$ is the $2\pi$-periodic coupling (phase interaction) function. Now consider the system of~$n$ globally coupled identical phase oscillators 
\begin{equation}\label{eq:OscVF}
\dot\vphi_k = F_k(\vphi) = \omega+\Th_k(\vphi, \vphi)
\end{equation}
with $\Th$ defined as in \eqref{eq:Theta}. We may assume $\omega=0$ without loss of generality.

The system~\eqref{eq:OscVF} is $\Sn\times\Tor$-equivariant~\cite{Ashwin1992}; 
the group~$\Sn$ of permutations acts by permuting the indices of 
the oscillators and the continuous symmetry~$\Tor$ acts by shifting 
the phase of all oscillators. As a consequence, both the 
diagonal
\begin{align}
\Dn &= \set{(\vphi_1, \dotsc, \vphi_\maxdim)\in\Torn}{\vphi_1 = \dotsb = \vphi_\maxdim}\\
\intertext{and the open~\emph{canonical invariant region}}
\label{eq:CIR}
\C &:= \set{(\vphi_1, \dotsc, \vphi_\maxdim)}{\vphi_1 < \dotsb < \vphi_n < \vphi_1+2\pi},
\end{align}
are dynamically invariant with respect to the dynamics 
of~\eqref{eq:OscVF}. The latter is bounded by codimension one invariant 
subspaces corresponding to $(n-1)$-cluster states that all intersect
at~$\Dn$.

\begin{rem}
The preservation of phase ordering implies that weak chimeras cannot exist for~\eqref{eq:OscVF} for any~$\maxdim$ even for our more general definition of a weak chimera~\cite{Ashwin2014a}.
\end{rem}

For some specific solutions, the average angular frequencies are easy 
to calculate:

\begin{enumerate}[(a)]
\item
If~$A=\sset{\vphi^*}$ is a fixed point of~\eqref{eq:OscVF} (relative to the continuous group action) then we have $\Omk(F, A) = \tsset{\frac{1}{\maxdim}\sum_jg(\vphi^*_n-\vphi^*_j)}$. 
\item
If~$A$ is an arbitrary (relative) periodic orbit $\vphi(t)$ with period~$P>0$ then $\Omk(A) = \tsset{\frac{1}{P}\int_0^P F_k(\vphi(t))\ud t}$.
\item
We have $\Omk(F, \Dn) = \sset{g(0)}$ where~$\Dn$ is either a 
continuum of fixed points (if $g(0)=0$) or the periodic orbit 
$\vphi(t)=(g(0)t, \dotsc, g(0)t)$. In particular, all oscillators are 
frequency synchronized.
\end{enumerate}

The following two observations become important in the next section; they
assert that changing the coupling function~$g$ 
locally at zero affects the angular frequencies on~$\Dn$ but not the 
dynamics of~\eqref{eq:OscVF} on compact~$A\subset\C$.

First, Lemma~\ref{lem:CloseCoarseBound} gives information about
the average angular frequencies of almost fully (phase) synchronized 
oscillators. Write $Z^{(\e)}=F+\e Y$ for a small perturbation of~$F$ 
as in~\eqref{eq:Feps} and set $D=\set{\vphi(t)}{t\geq 0}$ for a 
solution~$\vphi$ for the flow of~$Z^{(\e)}$ that stays close to~$\Dn$.
We have $\Omk(F, \Dn) = N_k(F, \Dn)= \sset{g(0)}$ and for $\e_0>0$
there exists $\eta>0$ such that
\begin{equation}
\label{eq:omgdn}
\Omk(Z^{(\e)}, D) \subset [g(0)-\eta, g(0)+\eta]
\end{equation}
for all $0\leq\e<\e_0$. In particular, if $D \subset \Dn$ then~$\eta$ 
does not depend on~$F$.

Second, the following lemma implies that the dynamics on the 
canonical invariant region are independent of $g$ in a neighborhood of
zero. To highlight the dependence of~$F$ on~$g$, we write~$F^{(g)}$ for 
the remainder of this section.

\begin{lem}\label{lem:Coupling}
Let $A\subset \C$, with~$\C$ as in \eqref{eq:CIR}, be compact for the 
coupled phase oscillator system~\eqref{eq:OscVF} with coupling 
function~$g$. Then there exists a closed interval $I\subset (0, 2\pi)$ 
such $F^{(g)}|_A = F^{(\hat{g})}|_A$ for all coupling functions~$\gh$ 
with $\gh|_{I} = g|_{I}$.
\end{lem}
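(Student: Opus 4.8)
The plan is to exploit compactness of $A$ inside the open region $\C$ to produce a uniform gap between the phase differences $\vphi_k-\vphi_j$ that occur on $A$ and the value $0$ (equivalently $2\pi$, since all differences lie strictly between these). First I would observe that for each ordered pair $(k,j)$ with $k>j$, the continuous function $(\vphi_1,\dots,\vphi_n)\mapsto \vphi_k-\vphi_j$ takes values in the open interval $(0,2\pi)$ on $\C$; restricted to the compact set $A$ it attains a minimum $m_{kj}>0$ and a maximum $M_{kj}<2\pi$. Likewise, for $k<j$ the quantity $\vphi_k-\vphi_j \bmod 2\pi$ equals $2\pi-(\vphi_j-\vphi_k)$, so taking all pairs into account the set of values $\{\vphi_k-\vphi_j \bmod 2\pi : \vphi\in A,\ k\ne j\}$ together with the value $\vphi_k-\vphi_k=0$ realized on the diagonal omits a neighborhood of $0$; more precisely all the nonzero differences lie in a compact subinterval $[a,b]\subset(0,2\pi)$ with $a=\min_{k>j} m_{kj}$ and $b=\max_{k>j}M_{kj}$.

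Next I would note that the vector field in \eqref{eq:OscVF} is built entirely out of evaluations $g(\vphi_k-\vphi_j)$: the right-hand side $F^{(g)}_k(\vphi)=\omega+\frac{1}{n}\sum_j g(\vphi_k-\vphi_j)$ depends on $g$ only through its values on the set of phase differences $\{\vphi_k-\vphi_j : \vphi\in A\}$ (read modulo $2\pi$, since $g$ is $2\pi$-periodic). The argument $\vphi_k-\vphi_k=0$ appears, contributing the fixed term $g(0)$, but for $A\subset\C$ this value $0$ is isolated from the off-diagonal differences by the gap established above. Hence I would choose the closed interval $I$ to be, say, $I=[a,b]$ if $g(0)$ is irrelevant, or — to also pin down the diagonal contribution — any closed interval contained in $(0,2\pi)$ containing $[a,b]$; actually the cleanest choice is simply $I=[a,b]$ and then additionally note that $g(0)$ enters, so one should instead take $I$ to be a closed interval containing both $0$'s neighborhood issue... — here is where care is needed, see below. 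With $I=[a,b]\subset(0,2\pi)$, if $\gh|_I=g|_I$ then for every $\vphi\in A$ and every pair $k\ne j$ we have $\vphi_k-\vphi_j\bmod 2\pi\in I$, so $\gh(\vphi_k-\vphi_j)=g(\vphi_k-\vphi_j)$, and for $k=j$ we get $\gh(0)$ versus $g(0)$.

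The one genuine subtlety — and the main obstacle — is the diagonal term $g(\vphi_k-\vphi_k)=g(0)$: since $0\notin(0,2\pi)=$ interior, and indeed $0$ is precisely the value we are told may be changed, the naive interval $I=[a,b]$ does not control $g(0)$. The resolution is that this term is the \emph{same} for all $k$: $F^{(g)}_k$ contains $\frac1n g(0)$ from the $j=k$ summand, so $F^{(g)}_k(\vphi)-F^{(\gh)}_k(\vphi)=\frac1n\big(g(0)-\gh(0)\big)$, a constant vector independent of $k$ and $\vphi$. This is a uniform drift, not zero in general. Re-reading the statement, what must be intended is that we are free to \emph{also} require $\gh$ agree with $g$ at $0$; since $A\subset\C$ is strictly away from $\Dn$ one can enlarge $[a,b]$ to a closed interval $I\subset(0,2\pi)$ — note $2\pi\equiv 0$, so ``$(0,2\pi)$'' wrapped around does contain a neighborhood of $0$ — hmm, this is exactly the loophole: working on the circle $\Tor$, the complement of $[a,b]$ in $\Tor$ is an open arc containing $0$, and one may choose $I$ to be the \emph{closed} arc $[b-2\pi,a]$ (mod $2\pi$) through $0$... that reverses the roles. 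I would therefore present it as: choose $I=[a,b]\subset(0,2\pi)$; then $F^{(g)}|_A$ and $F^{(\gh)}|_A$ differ only by the constant $\frac1n(g(0)-\gh(0))(1,\dots,1)$; if one additionally includes $0$ in $I$ (legitimate since $I$ need only be a closed subinterval of $(0,2\pi)$ with the convention $0\sim 2\pi$, i.e. $I$ a closed arc in $\Tor\setminus([a,b]^c\ni\text{nothing})$) the term vanishes. I expect the paper handles this by simply allowing $I$ to be a closed \emph{arc} on $\Tor$ avoiding the single value realized only at... — in any case, the structural content is: (i) compactness gives a uniform gap, (ii) $F$ depends on $g$ only through finitely many difference-evaluations, (iii) the diagonal value $g(0)$ is the only one not automatically covered, and handling it is the crux.
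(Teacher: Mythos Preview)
Your approach is identical to the paper's: collect the off-diagonal phase differences $\vphi_k-\vphi_j$ ($k\neq j$) occurring on the compact $A$, observe they avoid $0$, use compactness to bound them away from $0$ and $2\pi$, and take $I=[\inf J,\sup J]$ where $J=\bigcup_{k\neq j}\{\vphi_k-\vphi_j:\vphi\in A\}\subset(0,2\pi)$.

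The diagonal term $g(0)$ that you correctly flag as an obstacle is simply ignored in the paper's proof: its final sentence asserts ``$F^{(g)}|_A$ only depends on the values $g$ takes on $I$'' without further comment. So the imprecision you identified is present in the paper itself, not a defect of your argument. Your computation that $F^{(g)}_k-F^{(\hat g)}_k=\tfrac{1}{n}\bigl(g(0)-\hat g(0)\bigr)$ is a uniform constant drift is exactly right; the implicit justification is that this constant can be absorbed into the intrinsic frequency $\omega$ (the paper remarks just before the lemma that $\omega$ may be taken to be zero without loss of generality), and a uniform additive shift of all $F_k$ leaves invariant sets, their stability, and all frequency \emph{differences} unchanged. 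In the one place the lemma is invoked with $\hat g(0)\neq g(0)$ (the proof of Theorem~\ref{thm:CWC}), the aim is to separate $\hat g(0)$ from the frequency range $N(\hat g)$ of the incoherent population; even accounting for the $\tfrac{1}{n}$-shift of the latter, enlarging $\hat g(0)$ still opens a gap growing like $\tfrac{n-1}{n}\hat g(0)$. In short: your proof is the paper's proof, and the subtlety you worried about is a genuine but ultimately harmless imprecision in the lemma's statement.
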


\begin{proof}
Since $A\subset\C$ note that 
$\abs{\vphi_j-\vphi_k}>0$ for $k\neq j$ and
$(\vphi_1, \dotsc, \vphi_\maxdim)\in A$.
Set 
$J = \bigcup_{j\neq k}\set{\vphi_k-\vphi_j}{\vphi\in A} \subset \Tor\sm\sset{0}$
and we may write $J\subset (0, 2\pi)$.
Since~$A$ is compact, $I = [\inf J, \sup J]$ is the desired compact 
interval. Now $F^{(g)}|_A$ only depends on the values $g$ takes
on~$I$ and the result follows.
\end{proof}

\section{Persistence of weak chimeras}
\label{sec:CWC}

We now apply the results of Section~\ref{sec:Persistence} to networks with weakly coupled populations of phase oscillators, where each population is as introduced in the previous section.

\subsection{Persistence for weakly symmetrically coupled populations}
The weakly coupled product of the dynamics~\eqref{eq:OscVF} with itself which defines a dynamical system with 
$\vphi = (\vphi_{1}, \vphi_{2})\in\Torn\times\Torn=\Tornn$ where $\vphi_{\ell}=(\vphi_{\ell,1}, \dotsc, \vphi_{\ell,\maxdim})$. 
More explicitly, the dynamics in the case of weak coupling are given by
\[
\dot{\vphi}=F^{(\e,g)}(\vphi)
\]
where
\begin{equation}\label{eq:DynP}
\begin{aligned}
\dot\vphi_{1,k} &= F^{(\e,g)}_{1,k}(\vphi) := F_k(\vphi_{1}) + \e \Th_k(\vphi_{1},\vphi_{2}) ,\\
\dot\vphi_{2,k} &= F^{(\e,g)}_{2,k}(\vphi) := F_k(\vphi_{2}) + \e \Th_k(\vphi_{2},\vphi_{1}) ,
\end{aligned}
\end{equation}
for $k=1, \dotsc, \maxdim$ where $F_k$ is given by \eqref{eq:OscVF} 
and~$\Theta$ by \eqref{eq:Theta}. If~$A_\e\subset\Tornn$ is compact 
and dynamically invariant for~\eqref{eq:DynP} we let 
$\Omega_{\ell,k}(F^{(\e,g)}, A_\e)$ denote the angular frequency intervals 
for oscillator~$(\ell, k)$ with phase $\vphi_{\ell,k}$, 
$\ell\in\sset{1, 2}$, $k\in\sset{1, \dotsc, \maxdim}$.
Moreover, for $D\subset\Tornn$ write 
\[N_{\ell, k}(F^{(\e, g)}, D) = \Big[\inf_{\vphi\in D}F^{(\e, g)}_{\ell, k}(\vphi), \sup_{\vphi\in D}F^{(\e, g)}_{\ell, k}(\vphi)\Big]\] as in~\eqref{eq:Bound}
and we have
$\Omega_{\ell,k}(F^{(\e,g)}, A_\e)\subset N_{\ell, k}(F^{(\e, g)}, A_\e)$ 
if~$A_\e$ is dynamically invariant.
Observe that for $\e=0$ the system decouples into two identical 
groups of~$n$ oscillators---both of which with nontrivial 
dynamics~\eqref{eq:OscVF}.

Note that, in addition to the continuous~$\Tor$ symmetry, \eqref{eq:DynP} is equivariant with respect to the action of a symmetry group $\Sn\wr \Sk{2}$ where~$\wr$ is the wreath product~\cite{Dionne1996}. That is, $\Sn\wr\Sk{2}= (\Sn)^2\times_s \Sk{2}$ where the~$\Sn$ permute the oscillators within the each group of $n$ oscillators and the $\Sk{2}$ permutes the two groups. Observe that this is only a semidirect product~$\times_s$ as the two sets of permutations do not necessarily commute. This group acts transitively on the oscillators: 
the oscillators are indistinguishable. Both
$\Torn\times\Dn\subset\Tornn$ and $\Dn\times\Torn\subset\Tornn$ are dynamically invariant for any $\e\geq 0$ as fixed point subspaces of the action of $\Sn\wr\Sk{2}$.

\subsubsection{Persistence of weak chimeras}
We now state the main result of this section; the notation~$\Ai$ 
suggests that this invariant set corresponds to the cluster of incoherent oscillators for the chimera while the remaining oscillators are coherent in the sense that they are fully phase-synchronized.

\begin{thm}
\label{thm:CWC}
Suppose that~$g$ is a coupling function such that $\Ai\subset\C$ is a compact, forward invariant, and sufficiently stable (or unstable) set for the dynamics of~\eqref{eq:OscVF}. For any $\delta>0$ with $\overline{B_\delta(\Ai)}\subset\C$ there exist a smooth coupling function $\hat{g}$ and an $\e_0>0$ such that the
weakly coupled product system~\eqref{eq:DynP} for $F^{(\e,\hat{g})}$ has a weak chimera~$A_\e$ with $A_\e \subset \Dn\times B_\delta(\Ai)$ for all $0\leq\e<\e_0$.
\end{thm}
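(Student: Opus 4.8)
The plan is to build the weak chimera in the product system by combining the persistence results of Section~\ref{sec:Persistence} with the freedom to modify the coupling function near zero, as exposed in Lemmas~\ref{lem:Coupling} and~\ref{lem:CloseCoarseBound}. First I would fix $\delta>0$ with $\overline{B_\delta(\Ai)}\subset\C$. Since $\Ai\subset\C$ is compact, Lemma~\ref{lem:Coupling} (applied to the compact set $\overline{B_\delta(\Ai)}\subset\C$) yields a closed interval $I\subset(0,2\pi)$ such that the vector field on $\overline{B_\delta(\Ai)}$ depends only on $g|_I$. I would then choose $\hat g$ so that $\hat g|_I = g|_I$ (so that $\Ai$ remains a compact, forward-invariant, sufficiently stable set for $F^{(\hat g)}$ inside $\C$, with the same Lyapunov function) but with $\hat g(0)$ adjusted so that $\hat g(0)\neq \omega + \Theta_k|_{\Ai}$-values — more precisely, so that $g(0)$ (the frequency on $\Dn$) lies strictly outside the frequency interval $\Omega_k(F^{(\hat g)},\Ai)$. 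This is possible because $\Omega_k(F^{(\hat g)},\Ai)\subset N_k(F^{(\hat g)},\overline{B_\delta(\Ai)})$ is a fixed compact interval determined by $g|_I$, and $0\notin I$, so $\hat g(0)$ is a completely free parameter; choose it bounded away from that interval by some gap $2\gamma>0$.

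Next I would apply Theorem~\ref{thm:PersistenceFactor} in the form relevant here: the subspace $\Dn\times\Torn\subset\Tornn$ is dynamically invariant for all $\e\geq 0$ (fixed-point subspace of $\Sn\wr\Sk{2}$), and on it the second factor carries the dynamics of $\dot y = f(y) + \e g(x,y)$ while the first factor is the diagonal. Actually the cleaner route is Theorem~\ref{thm:Products} applied on the invariant subspace $\Dn \times \Torn$: the $x$-factor restricted to $\Dn$ has the trivial attractor (a point or the diagonal periodic orbit $\varphi(t)=(\hat g(0)t,\dots)$), which is sufficiently stable there, and the $y$-factor has the sufficiently stable attractor $\Ai$ with Lyapunov function $V_2$; hence for the chosen $\delta$ there is $\e_0>0$ and, for every $0\leq\e<\e_0$, a dynamically invariant set $A_\e\subset \Dn\times B_\delta(\Ai)$. (If $\Ai$ is sufficiently unstable one uses the $\alpha$-limit version.) One should pass to a compact, connected, chain-recurrent forward-invariant piece of $A_\e$ — e.g. an $\omega$-limit set of a point in the absorbing region — so that $A_\e$ meets the definition of a weak chimera; connectedness of the $\omega$-limit set follows as in~\cite{Ashwin2014a}.

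Finally I would verify the two frequency conditions defining a weak chimera. Label oscillators so that $l$ is one of the coherent (diagonal, second-population) oscillators and $k,j$ are two of the incoherent (first-population, in the $B_\delta(\Ai)$ block) oscillators — or vice versa; here I keep the notation of the statement, where $\Ai$ sits in the $B_\delta(\Ai)$ factor. Within the population carrying $\Ai$, the two oscillators $k,j$ are frequency synchronized on $A_\e$: for $\e=0$ this holds because $\Ai\subset\C$ preserves phase ordering so all frequency differences vanish (the argument of the Remark after Lemma~\ref{lem:Coupling} combined with the fact that, on a compact subset of $\C$, $\langle F_k\rangle - \langle F_j\rangle$ integrates the bounded $2\pi$-periodic derivative of a bounded phase difference, giving $O(1/T)$); for $0\leq\e<\e_0$ the inter-population term $\e\Theta_k(\varphi_1,\varphi_2)$ is a bounded perturbation, and shrinking $\e_0$ using Lemma~\ref{lem:CloseCoarseBound} keeps $\OmD{k;j}(F^{(\e,\hat g)},A_\e)$ inside an arbitrarily small neighborhood of $\{0\}$ — and since on the phase-ordered region the difference is in fact exactly zero up to the perturbation, one gets $\OmD{k;j}=\{0\}$ exactly (this is the point to be careful about). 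For the second condition, the diagonal oscillator $l$ has $\Omega_l(F^{(\e,\hat g)},A_\e)\subset[\hat g(0)-\eta,\hat g(0)+\eta]$ by~\eqref{eq:omgdn}, while $\Omega_k(F^{(\e,\hat g)},A_\e)\subset B_\eta(N_k(F^{(\hat g)},B_\delta(\Ai)))$ by Lemma~\ref{lem:CloseCoarseBound}; by the choice of $\hat g(0)$ with gap $2\gamma$, for $\eta<\gamma$ (achieved by further shrinking $\e_0$) these two intervals are disjoint, so $\Omega_l\cap\Omega_k=\emptyset$. This establishes that $A_\e$ is a weak chimera.

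\textbf{Main obstacle.} The delicate step is the exact frequency-synchronization claim $\OmD{k;j}(F^{(\e,\hat g)},A_\e)=\{0\}$ for $\e>0$: Lemma~\ref{lem:CloseCoarseBound} only gives closeness to $\{0\}$, not equality, so one must argue that the $\Sn\wr\Sk{2}$-symmetry together with the phase-ordering structure inside $B_\delta(\Ai)\subset\C$ forces the two in-population frequencies to coincide exactly — i.e. that the bounded quantity $\varphi_{1,k}(t)-\varphi_{1,j}(t)$ stays bounded along trajectories in $A_\e$ even with the coupling on, so its time average over $[0,T]$ is $O(1/T)\to 0$. This requires checking that the perturbed flow on $\Dn\times B_\delta(\Ai)$ still respects the phase ordering of the $\varphi_{1,\cdot}$ coordinates, which holds provided $\e_0$ is small enough that trajectories starting in $B_\delta(\Ai)$ do not reach the boundary of $\C$ before being absorbed — exactly what $\overline{B_\delta(\Ai)}\subset\C$ and the absorbing property guarantee.
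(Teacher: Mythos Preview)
Your proposal is correct and follows the paper's overall strategy: modify $g$ near $0$ via Lemma~\ref{lem:Coupling} so that $\hat g(0)$ is separated from the incoherent frequency range, apply Theorem~\ref{thm:PersistenceFactor} using the flow-invariance of $\Dn\times\Torn$ to obtain $A_\e\subset\Dn\times B_\delta(\Ai)$, and bound the frequency intervals with Lemma~\ref{lem:CloseCoarseBound}.

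The one genuine difference is how you verify the frequency-synchronization condition $\OmD{k;j}=\{0\}$. You take $k,j$ from the \emph{incoherent} population and argue via phase ordering: since $A_\e$ projects into $B_\delta(\Ai)\subset\C$, the differences $\varphi_{2,k}(t)-\varphi_{2,j}(t)$ remain bounded along trajectories in $A_\e$, so $\langle F_k\rangle_T-\langle F_j\rangle_T=O(1/T)\to 0$. This is valid, and you are right that the absorbing property together with $\overline{B_\delta(\Ai)}\subset\C$ is exactly what guarantees the bound. The paper, however, takes the much simpler route of choosing $k,j$ from the \emph{coherent} population: on $A_\e\subset\Dn\times\Torn$ one has $\varphi_{1,1}(t)=\dotsb=\varphi_{1,n}(t)$ identically, whence $\OmD{1,k;1,j}=\{0\}$ trivially for every $\e$. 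This completely sidesteps what you flag as the ``main obstacle.'' Your argument has the incidental benefit of showing that \emph{both} populations are internally frequency-synchronized, but for the theorem as stated the paper's choice is cleaner. (A small labeling slip: with $A_\e\subset\Dn\times B_\delta(\Ai)$ it is the \emph{first} population that sits on the diagonal, not the second.)
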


\begin{proof}
Let $\delta>0$ be given and let us assume~$\Ai$ is sufficiently stable.
Compactness and continuity imply that 
$M := \max_{k\in\sset{1, \dotsc, \maxdim}}\max_{\vphi\in\Tornn}\abs{\Th_k(\vphi)}<\infty$.
Define
\[
N(g) := \bigcup_{k=1}^{\maxdim} N_{2,k}(F^{(0, g)}, \Dn \times B_\delta(\Ai)),
\]
assume that~$\e_0$ is fixed (we will determine the exact value below)
and set $\eta=\e_0M$.
If $B_{\eta}(\sset{g(0)})\cap B_{\eta}(N(g)) = \emptyset$ 
then we choose $\gh=g$, otherwise we choose~$\gh$ by Lemma~\ref{lem:Coupling}
such that 
$F^{(0, \gh)}|_{B_\delta(\Ai)^2}=F^{(0, g)}|_{B_\delta(\Ai)^2}$---implying $N(g)=N(\gh)$---and $\gh(0)$ sufficiently large so that
$B_{\eta}(\sset{\gh(0)})\cap B_{\eta}(N(\gh)) = \emptyset$.
In particular, for this choice of~$\gh$ the set~$\Ai$ is still 
forward invariant, and sufficiently stable in $B_\delta(\Ai)$
for~\eqref{eq:OscVF}. A similar argument applies if~$\Ai$ is 
sufficiently unstable.

Applying Theorem~\ref{thm:PersistenceFactor} (note that $\Delta_n$ is compact in the topology of $\Tor^n$) yields 
an~$\e_0>0$---take this as the unspecified value of~$\e_0$ above---and 
compact sets~$A_\e\subset\Dn\times B_\delta(\Ai)$ for all $0\leq\e<\e_0$ 
such that~$A_\e$ is dynamically invariant for the flow defined 
by~$F^{(\e, g)}$. Moreover, $A_\e$ can be assumed to be connected and chain-recurrent buy taking a subset if necessary.

It remains to be shown that any~$A_\e$ is a weak chimera. For
$\e=0$ the dynamics are uncoupled with 
$\Omega_{1,k}(F^{(0,\gh)}) = N_{1,k}(F^{(0, \gh)}, \Dn \times \Ai) = \sset{\gh(0)}$
for $k\in\sset{1, \dotsc, \maxdim}$. The weak coupling for $\e>0$
in~\eqref{eq:DynP} may be seen as a bounded nonautonomous perturbation
to each factor, that is, $\dot\vphi_{\ell}=F(\vphi_{\ell})+\e Y(\vphi_{\ell}, t)$ 
with $\norm{Y}\leq M$. By Lemma~\ref{lem:CloseCoarseBound}
we have (with~$\eta$ as above)
\begin{align*}
\Om_{1,k}(F^{(\e,\gh)}, A_\e) &\subset 
B_\eta(N_{1,k}(F^{(0, \gh)}, \Dn \times B_\delta(\Ai))) = B_\eta(\sset{\gh(0)}),\\
\Om_{2,k}(F^{(\e,\gh)}, A_\e) &\subset B_\eta(N_{2,k}(F^{(0, \gh)}, \Dn \times B_\delta(\Ai))) \subset B_\eta(N(\gh))
\end{align*}
for all $k\in\sset{1, \dotsc, \maxdim}$ and any $0\leq \e<\e_0$.
For $\vphi\in A_\e$ we have $\vphi_{1, 1}=\dotsb=\vphi_{1, \maxdim}$
and hence 
$\Om_{1,1}(F^{(\e,g)}, A_\e)=\dotsb=\Om_{1,\maxdim}(F^{(\e,g)}, A_\e)$.
Since $B_\eta(\sset{\gh(0)})\cap B_\eta(N(\gh))=\emptyset$
by the choice of~$\gh$, we have $\Om_{1,k}(F^{(\e,g)}, A_\e)\cap
\Om_{2,k}(F^{(\e,g)}, A_\e)=\emptyset$ for all 
$k\in\sset{1, \dotsc, \maxdim}$. Therefore any~$A_\e$ is a weak chimera 
for~\eqref{eq:DynP} with the coupling function~$\gh$.
\end{proof}

\begin{rem}
We remark that the results of Theorem~\ref{lem:Coupling} can be generalized in a straightforward way to $m\geq 2$ populations of $n$ coupled phase 
oscillators with dynamics given by 
\begin{equation}\label{eq:DynPnm}
\dot\vphi_{\ell,k} = F_k(\vphi_{\ell}) + \e\sum_{r\neq \ell}\Th_k(\vphi_{\ell}, \vphi_r)
\end{equation}
for phases $\vphi_{\ell,k}$ with $k=1, \dotsc, \maxdim$ and $\ell=1, \dotsc, m$. This more general system has symmetry $\Sn\wr \Sk{m}$ which acts transitively on the oscillators.
\end{rem}

\subsubsection{Stability}

The stability of a weak chimeras~$A_\e$ of Theorem~\ref{thm:CWC} as a subset of $\Dn\times\Torn$ depends on the stability properties of~$\Ai\subset\Torn$. Introduction of coordinates $\psi_{\ell,k}=\vphi_{\ell,k}-\vphi_{1,k}$ eliminates the phase shift symmetry. In fact $\psi_{1,k} = 0$ and thus the reduced system is a dynamical system on~$\Torn$. If~$\Ai$ is sufficiently stable so is~$A_\e$ in the reduced system; if~$\Ai$ is sufficiently unstable in the unperturbed system so is~$A_\e$. Of course, if~$\Ai$ is sufficiently stable we obtain a sufficiently unstable set by reversing time and vice versa.

Transversal stability of the invariant set $\Dn\times\Torn\subset\Tornn$ is determined by the sign of~$g'(0)$ \cite{Ashwin1992}. More precisely, $\Dn\times\Torn\subset\Tornn$ is asymptotically stable if $g'(0)<0$ and asymptotically unstable if $g'(0)>0$. Through local perturbation of the coupling function, one can get the desired transversal stability properties.

\begin{cor}
\label{cor:StabilitySwap}
Let $A_\e\subset\Dn\times\Torn$ be a weak chimera of~\eqref{eq:DynP} with coupling function~$g$ such that~$A_\e$ is asymptotically stable in $\Dn\times\Torn$. Then there exists a coupling function such that~$A_\e$ is asymptotically stable in~$\Tornn$.
\end{cor}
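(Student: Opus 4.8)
The plan is to reduce the statement to a transversal stability adjustment. Recall that the submanifold $\Dn\times\Torn\subset\Tornn$ is invariant for \eqref{eq:DynP} for every coupling function, and by \cite{Ashwin1992} it is transversally asymptotically stable exactly when $g'(0)<0$; the transversal linearisation along $\Dn\times\Torn$ is, for small~$\e$, an $O(\e)$ perturbation of $g'(0)$ times a fixed matrix, so for $g'(0)<0$ this manifold is uniformly normally hyperbolic and attracting for all small~$\e$. The first step is to argue that asymptotic stability of $A_\e$ \emph{within} $\Dn\times\Torn$ together with transversal asymptotic stability of $\Dn\times\Torn$ in $\Tornn$ forces asymptotic stability of $A_\e$ in $\Tornn$. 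I would do this via the stable foliation of the normally hyperbolic attracting manifold $\Dn\times\Torn$: a trajectory starting close to $A_\e$ in $\Tornn$ contracts exponentially onto $\Dn\times\Torn$ while its base point stays near $A_\e$ and is then attracted to $A_\e$ by stability within the reduced system; equivalently one combines a transversal Lyapunov function with a Lyapunov function for $A_\e$ in the reduced system. Given this, it suffices to produce a coupling function with $g'(0)<0$ for which the weak chimera persists with the same stability inside $\Dn\times\Torn$.

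If $g'(0)<0$ already there is nothing to do. Otherwise I would modify $g$ only in a small neighbourhood of~$0$: fix $\rho>0$ small and a smooth cut-off $\chi$ supported in $(-\rho,\rho)$ with $\chi(0)=1$ and $\chi'(0)=0$, and set $\gh(\phi)=g(\phi)+c\,\chi(\phi)\sin\phi$ with $c<-g'(0)$. Then $\gh(0)=g(0)$, $\gh'(0)=g'(0)+c<0$, and $\gh$ coincides with~$g$ outside $(-\rho,\rho)$, hence on any closed interval $I\subset(0,2\pi)$ bounded away from~$0$.

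The remaining step is to check that this local change leaves the weak chimera essentially intact. On $\Dn\times B_\delta(\Ai)$ with $\overline{B_\delta(\Ai)}\subset\C$, the within-population dynamics of \eqref{eq:DynP} do not change: population~$1$ lies on $\Dn$, so its internal coupling equals $\gh(0)=g(0)$, and population~$2$ lies in~$\C$, so by Lemma~\ref{lem:Coupling} its internal coupling depends only on the unchanged values of~$g$ on some $I\subset(0,2\pi)$. Hence $\Ai$ is still compact, forward invariant and sufficiently stable for \eqref{eq:OscVF} with~$\gh$, and the separation $B_\eta(\sset{\gh(0)})\cap B_\eta(N(\gh))=\emptyset$ (with $N(\gh)=N(g)$ and $\gh(0)=g(0)$) used in the proof of Theorem~\ref{thm:CWC} persists. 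The only effect of replacing $g$ by $\gh$ in \eqref{eq:DynP} is an $O(\e)$ change of the cross-coupling terms $\e\Th_k(\vphi_1,\vphi_2)$ and $\e\Th_k(\vphi_2,\vphi_1)$; since $\Dn\times\Torn$ stays invariant, Theorem~\ref{thm:PersistenceFactor} applies exactly as in the proof of Theorem~\ref{thm:CWC} and yields, for all sufficiently small~$\e$, a dynamically invariant $A_\e\subset\Dn\times B_\delta(\Ai)$ which is again a weak chimera by the frequency-separation argument there and which, since $\Ai$ is sufficiently stable, is asymptotically stable within $\Dn\times\Torn$ as in the stability discussion preceding the corollary. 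With $\gh'(0)<0$ the first step then gives asymptotic stability of $A_\e$ in $\Tornn$.

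I expect the main obstacle to be the first step: passing from ``asymptotically stable within $\Dn\times\Torn$'' and ``transversally asymptotically stable'' to ``asymptotically stable in $\Tornn$'' requires the contraction onto $\Dn\times\Torn$ to be uniform near $A_\e$, so that base points of nearby trajectories genuinely track orbits inside $\Dn\times\Torn$ rather than merely converging to $\Dn\times\Torn$ without control on the limit set. This is exactly where normal hyperbolicity of $\Dn\times\Torn$ and its stable foliation enter. A secondary point needing care is that the modification near~$0$ produces the persistent invariant set of Theorem~\ref{thm:PersistenceFactor} rather than the literal original $A_\e$, so the conclusion is most naturally stated for this persistent weak chimera.
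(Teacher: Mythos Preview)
Your proposal is correct and follows essentially the same approach as the paper: modify $g$ locally near $0$ via Lemma~\ref{lem:Coupling} so that $\gh'(0)<0$ while leaving the dynamics on $\overline{B_\delta(\Ai)}\subset\C$ unchanged. The paper's own proof is a single sentence invoking Lemma~\ref{lem:Coupling} to pick $\gh$ with $F^{(0,\gh)}|_{\Dn\times\overline{B_\delta(\Ai)}}=F^{(0,g)}|_{\Dn\times\overline{B_\delta(\Ai)}}$ and $\gh'(0)<0$.

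Your write-up is in fact more careful than the paper on two points. First, you explicitly address why ``asymptotically stable within $\Dn\times\Torn$'' together with ``$\Dn\times\Torn$ transversally attracting'' yields asymptotic stability in $\Tornn$; the paper simply asserts the transversal stability criterion $g'(0)<0$ in the preceding paragraph and treats the combination as evident. Second, you correctly observe that the cross-coupling terms $\e\Th_k(\vphi_\ell,\vphi_{3-\ell})$ evaluate $g$ at arbitrary phase differences, so the local modification does perturb the $\e>0$ dynamics by $O(\e)$, and hence the conclusion is most honestly stated for the persistent weak chimera obtained by re-running Theorem~\ref{thm:CWC} rather than for the literal original $A_\e$. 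The paper's one-line proof only matches the $\e=0$ vector fields and does not comment on this; your caveat is well placed.
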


\begin{proof}
An application of Lemma~\ref{lem:Coupling}; choose a coupling function~$\gh$ such that 
$F^{(0,\gh)}|_{\Dn\times\overline{B_\delta(\Ai)}}=F^{(0,g)}|_{\Dn\times\overline{B_\delta(\Ai)}}$ and 
$\gh'(0)<0$.
\end{proof}

Consequently, the weak chimera states constructed above can be asymptotically stable, asymptotically unstable, or of saddle type. They are asymptotically stable (unstable) if~$\Ai$ is sufficiently stable (unstable) and $\Dn\times\Torn$ is transversally stable (unstable) and of saddle type if
if~$\Ai$ is sufficiently stable and $\Dn\times\Torn$ is transversally unstable and vice versa.

\subsubsection{Chaotic weak chimeras: dynamics on the invariant set}

Suitable coupling functions may now give rise to chaotic weak chimera 
in the limit of vanishing coupling. So far, we have not made any 
assumptions on the dynamics on~$\Ai$; they may be chaotic. If~$g$ is 
chosen such that the dynamics of \eqref{eq:OscVF} on~$\Ai$ has 
positive Lyapunov exponents then so will the dynamics of~\eqref{eq:DynP} 
on $A_0=\Dn\times\Ai$. Therefore it is reasonable to assume that
the positive Lyapunov exponents will persist for a set of $\e>0$
of positive measure, that is~$A_\e$ are chaotic weak chimeras 
for~$\e$ sufficiently small. 

\begin{rem}
\label{rem:PersistenceChaos}
To rigorously show that positive Lyapunov exponents persist for nonzero $\e>0$ one would have to make further assumptions that guarantee there is a suitable invariant measure for $\e=0$ that deforms nicely for sufficiently small $\e>0$; see also Section~\ref{sec:Persistence}.
\end{rem}

We give explicit examples of coupling functions that give rise to
chaotic dynamics on~$A_0$ and numerical evidence that the persisting
weak chimeras~$A_\e$ are chaotic for nonzero coupling
in Section~\ref{sec:Numerics}.

\subsection{Persistence for coupling breaking symmetry}
\label{sec:CWCnosym}

So far we have considered weak coupling between populations that preserved the symmetry of the system for any choice of~$\e$. Using the notation otherwise as for \eqref{eq:DynP} and assume that $Y_1, Y_2: \Tornn\to \Rn$ are Lipshitz continuous. The system 
\begin{equation}\label{eq:DynPns}
\begin{aligned}
\dot\vphi_{1,k} &= F^{(\e,g)}_{1,k}(\vphi) = F_k(\vphi_1) + \e Y_{1,k}(\vphi),\\
\dot\vphi_{2,k} &= F^{(\e,g)}_{2,k}(\vphi) = F_k(\vphi_2) + \e Y_{2,k}(\vphi),
\end{aligned}
\end{equation}
for $k=1, \dotsc, \maxdim$ is a weakly coupled product system. While~\eqref{eq:DynPns} is $(\Sn\wr\Sk{2})\times\Tor$ equivariant for $\e=0$, 
for a generic choice of $Y_1, Y_2$ the symmetry is broken whenever $\e>0$\footnote{Note that in this case the oscillators may not be indistinguishable anymore.}. However, the 
weak chimera may persist as the next result shows.

\begin{thm}
\label{thm:CWCns}
Suppose that~$g$ is a coupling function such that $\Ai\subset\C$ and $\Ac = \Dn\subset\Torn$ are compact, forward invariant and sufficiently stable (or sufficiently unstable) sets for the dynamics of~\eqref{eq:DynPns} for $F^{(0,g)}$. For any $\delta>0$ there exist a smooth coupling function~$\gh$ and $\e_0>0$ such that the weakly coupled product system~\eqref{eq:DynP} has a sufficiently stable (unstable) weak chimera~$A$ with $A \subset B_\delta(\Ac\times\Ai)$ for $F^{(\e,\gh)}$ and any $0\leq\e<\e_0$.
\end{thm}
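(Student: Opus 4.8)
The plan is to run the proof of Theorem~\ref{thm:CWC} essentially verbatim, with two modifications forced by the loss of symmetry. First, a generic perturbation $\e(Y_1,Y_2)$ destroys the invariance of $\Dn\times\Torn$, so in place of Theorem~\ref{thm:PersistenceFactor} I would use Theorem~\ref{thm:Products}, applied to the two uncoupled copies $\dot\vphi_1=F^{(\gh)}(\vphi_1)$, $\dot\vphi_2=F^{(\gh)}(\vphi_2)$ with sufficiently stable (or unstable) attractors $\Ac=\Dn$ and $\Ai$. Second, the persisting invariant set $A_\e$ will lie only \emph{near} $\Dn$ in the first factor rather than on it, so the frequency synchronization of the coherent population must be recovered from confinement to a small neighborhood of $\Dn$. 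It suffices to treat $\delta>0$ small (for larger $\delta$ the conclusion follows from a smaller one), and we fix $\delta$ so small that $\overline{B_\delta(\Ai)}\subset\C$ and $4\delta$ is smaller than both $2\pi$ and the minimal phase gap of $\Ai$.

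Set $M:=\max_{\ell\in\sset{1,2}}\sup_{\vphi\in\Tornn}\norm{Y_\ell(\vphi)}<\infty$. Apply Lemma~\ref{lem:Coupling} to $\overline{B_\delta(\Ai)}\subset\C$ to get the interval $I\subset(0,2\pi)$, which is bounded away from $0$ and contains all phase differences occurring in $\overline{B_\delta(\Ai)}$; note $2\delta<\inf I$. Choose a smooth coupling function $\gh$ with $\gh|_I=g|_I$, so that $\Ai$ stays compact, forward invariant and sufficiently stable (unstable) for $\dot\vphi=F^{(\gh)}(\vphi)$ and $N(g):=\bigcup_k N_k(F^{(g)},B_\delta(\Ai))$ is unchanged; with $\gh'(0)$ of the same sign as $g'(0)$, so that $\Dn$ stays sufficiently stable (unstable) on a neighborhood of itself, its transverse stability type being governed by the derivative of the coupling at $0$~\cite{Ashwin1992}; and with $\gh$ varying on $[-2\delta,2\delta]$ by at most a fixed amount $\rho$ (placing the large excursion needed to rejoin $g$ on $[2\delta,\inf I]$, so $\rho$ is bounded independently of the rest of the construction) while $\gh(0)$ is taken so large that the interval $[\gh(0)-\rho,\gh(0)+\rho]$ (which contains $N_{1,k}(F^{(0,\gh)},B_\delta(\Dn\times\Ai))$ for every $k$) has distance more than $2$ from $N(g)$. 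Now~\eqref{eq:DynPns} with coupling $\gh$ is a weakly coupled product system of the form~\eqref{eq:coupled}, with bounded coupling terms $\e Y_\ell$; Theorem~\ref{thm:Products} (on neighborhoods of $\Ac$ and $\Ai$, cf.\ the Remark after it) yields the claimed $\e_0>0$ --- further shrunk so that $\eta:=\e_0M\le 1$ --- and, for each $0\le\e<\e_0$, a connected chain-recurrent dynamically invariant set $A=A_\e\subset B_\delta(\Ac\times\Ai)$ that is sufficiently stable (resp.\ unstable) as the maximal invariant set in the absorbing region built from $V_1+V_2$ (cf.\ \cite{Michel2015}).

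It remains to see that $A_\e$ is a weak chimera. Treating, along any forward orbit in $A_\e$, the influence of the other population as a bounded nonautonomous forcing with bound $M$, Lemma~\ref{lem:CloseCoarseBound} gives for all $k$ and all $0\le\e<\e_0$
\[
\Om_{1,k}(F^{(\e,\gh)},A_\e)\subset B_\eta\bigl([\gh(0)-\rho,\gh(0)+\rho]\bigr),\qquad
\Om_{2,k}(F^{(\e,\gh)},A_\e)\subset B_\eta(N(g)),
\]
and the right-hand sides are disjoint since $\eta\le 1$ and $\gh(0)$ was chosen with the above gap; hence $\Om_{1,k}(F^{(\e,\gh)},A_\e)\cap\Om_{2,l}(F^{(\e,\gh)},A_\e)=\emptyset$ for all $k,l$. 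For frequency synchronization inside the coherent population fix distinct $k,j$ and $\vphi\in A_\e$: since $A_\e\subset B_\delta(\Dn\times\Ai)$ and the first-population coordinates agree on $\Dn$, the torus distance between $\vphi_{1,k}(t)$ and $\vphi_{1,j}(t)$ stays below $2\delta<\pi$ along the whole forward orbit, so the continuous real lift of $t\mapsto\vphi_{1,k}(t)-\vphi_{1,j}(t)$ stays in $(-2\delta,2\delta)$; therefore $\av{F^{(\e,\gh)}_{1,k}}_{T,\vphi}-\av{F^{(\e,\gh)}_{1,j}}_{T,\vphi}$ equals $T^{-1}$ times the net change of that lift over $[0,T]$, has absolute value at most $4\delta/T$, and so tends to $0$ as $T\to\infty$. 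Hence $\OmD{1,k;1,j}(F^{(\e,\gh)},A_\e)=\sset{0}$, and choosing two oscillators in population~$1$ and one in population~$2$ exhibits $A_\e$ as a weak chimera, completing the proof.

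The main obstacle is this last step: once the symmetry is broken the coherent cluster is no longer pinned to $\Dn$, so its internal frequency synchronization must be extracted from the mere confinement $A_\e\subset B_\delta(\Dn\times\Ai)$ --- which is where compactness of $A_\e$ and the smallness of $\delta$ are used in an essential way. A secondary, bookkeeping-type difficulty is to arrange $\gh$ so that $\Dn$, its stability type, and the band $N(g)$ all survive the local modification of the coupling near $0$ while $\gh(0)$ is pushed far away, and to make this choice independently of the $\e_0$ produced by Theorem~\ref{thm:Products} in order to avoid a circular dependence.
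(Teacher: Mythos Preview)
Your proof is correct and follows the paper's own approach: invoke Theorem~\ref{thm:Products} in place of Theorem~\ref{thm:PersistenceFactor} to obtain the persisting invariant set, modify the coupling via Lemma~\ref{lem:Coupling} to separate the frequency bands, and then verify the weak chimera condition as in Theorem~\ref{thm:CWC}. You are in fact more careful than the paper on one point it glosses over with ``the same argument as in the proof of Theorem~\ref{thm:CWC}'': once the symmetry is broken the coherent population sits only near $\Dn$, and you correctly recover its internal frequency synchronization from the uniform boundedness of the lifted phase differences on $A_\e$.
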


\begin{proof}
Persistence of invariant sets~$A_\e$ in each factor follows from 
Theorem~\ref{thm:Products}. The same argument as in the proof of 
Theorem~\ref{thm:CWC} shows that there is a smooth coupling function
such that~$A_\e$ is a weak chimera since Lemma~\ref{lem:Coupling} 
allows us to modify the coupling function in an open neighborhood of zero.
\end{proof}

While Theorem~\ref{thm:CWCns} only gives existence of sufficiently stable (or sufficiently unstable) weak chimeras for systems~\eqref{eq:DynPns} there may be others of saddle type. Note also that there are coupling functions for which there is a Lyapunov function for the fully synchronized solution~\cite{VanHemmen1993}.

\section{Examples of chaotic weak chimeras}
\label{sec:Numerics}

In this section we explicitly construct coupling functions that give rise 
to chaotic weak chimeras for~\eqref{eq:DynP} with $\e=0$. Moreover, we 
demonstrate numerically that these chaotic weak chimeras persist for $\e>0$. 
While we use the methods developed in the previous section to 
construct the chaotic weak chimeras, we do not check rigorously whether
the assumptions are satisfied.

\subsection{A chaotic weak chimera of saddle type}

Consider the dynamics of~\eqref{eq:OscVF} for $\maxdim=4$ oscillators. Suppose that the coupling function is given by the truncated Fourier series
\begin{equation}
\label{eq:gchaos}
{g}(\phi) = \sum_{r=0}^{4} c_r \cos (r\phi+\xi_r)
\end{equation}
with $c_1 = -2$, $c_2 = -2$, $c_3 = -1$, and $c_4 = -0.88$. For $\xi_1 = \eta_1$, $\xi_2 = -\eta_1$, $\xi_3 = \eta_1+\eta_2$, and $\xi_4 = \eta_1+\eta_2$ with $\eta_1=0.11151$, $\eta_2=0.05586$ the function~${g}$ gives rise to a chaotic attractor $\Ai\subset\C$ with positive maximal Lyapunov 
exponents~\cite{Bick2011}.

\begin{figure}
{\centerline{\includegraphics[scale=\imagescaling]{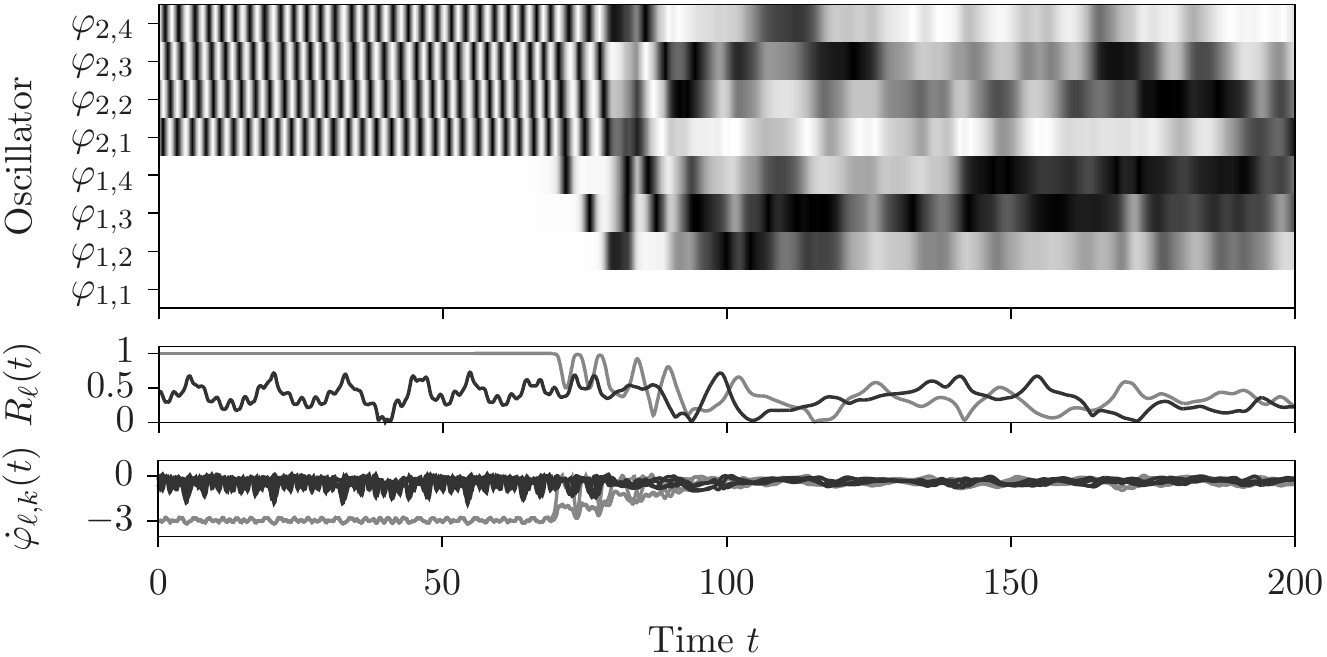}}}
\caption{\label{fig:N4CWCSaddle}
A transient started near a chaotic weak chimera of saddle type for two populations of $\maxdim=4$ oscillators \eqref{eq:DynP} with coupling function~\eqref{eq:gchaos} for $\eta_1=0.1104$, $\eta_2=0.057511$ and $\e=0.2$. The phase evolution of each oscillator in a co-rotating frame at the speed of oscillator~$1$ is shown using
a periodic grey scale ($\vphi_{\ell,k}(t)=0$, in black~$\vphi_{\ell,k}(t)=\pi$ in white). 
Middle and bottom panels show the order parameters~$R_\ell(t)$ of the 
populations~$\vphi_{\ell}$, $\ell=1,2$, and the instantaneous frequencies~$\dot\vphi_{\ell,k}(t)$. Note that 
for $t\lessapprox 70$ the populations are clearly are not frequency 
synchronized with chaotic oscillations---evidence of a chaotic weak chimera. 
After the trajectory leaves the vicinity of the saddle, it converges to
a homogeneous chaotic state.\bigskip
}
\end{figure}

\begin{figure}
\centerline{\includegraphics[scale=\imagescaling]{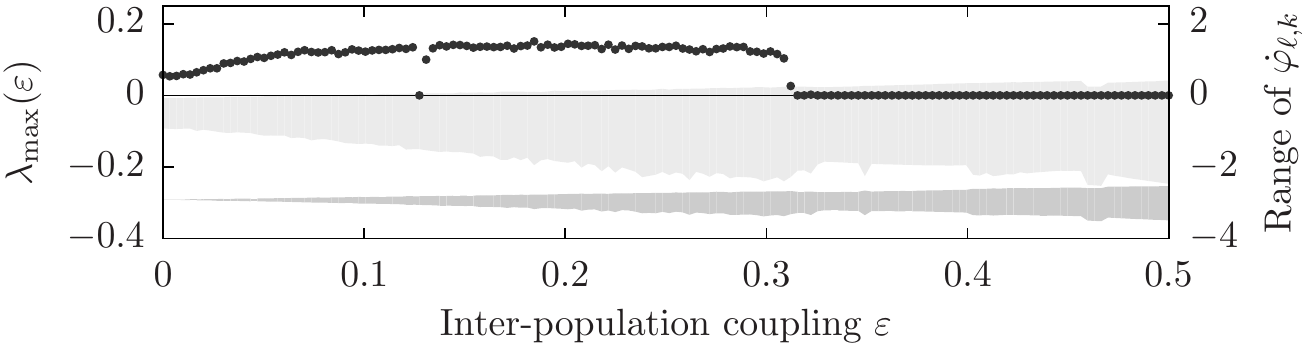}}
\caption{\label{fig:N4CWCSaddleScan}
Positive maximal Lyapunov exponents persist for $\e>0$ for two populations 
of $\maxdim=4$ oscillators~\eqref{eq:DynP} with coupling 
function~\eqref{eq:gchaos} for $\eta_1=0.1104$, $\eta_2=0.057511$. We 
approximated the maximal Lyapunov exponent (black dots) by 
integrating~\eqref{eq:DynP} from a random initial condition (sampled uniformly on 
$\Dn\times\Torn$) for $T=7000$ time units. 
The shaded regions show the intervals $[\min_{k,t}\dot\vphi_{\ell,k}(t), \max_{k,t}\dot\vphi_{\ell,k}(t)]$ for 
$\ell=1$ (dark grey) and 
$\ell=2$ (light grey)---where these do not overlap, there is no frequency synchronization between the two populations and hence a weak chimera.
}
\end{figure}

For this choice of coupling function and suitable~$\e$, the product system~\eqref{eq:DynP} gives rise to a chaotic weak chimera of saddle type. While 
these chaotic weak chimeras are attracting in $\Dn\times\Torn$, they are transversally unstable in~$\Tornn$ since ${g}'(0)>0$. 
Figure~\ref{fig:N4CWCSaddle} shows a trajectory that is initialized slightly 
off the invariant set.
The population order parameter $R_\ell(t) = \tabs{\frac{1}{\maxdim}\sum_{j=1}^\maxdim \exp(i\vphi_{\ell,j})}$, $\ell=1, 2$, characterizes synchronization in the system is equal to one for the synchronized population and fluctuating chaotically for the incoherent population.

Numerical simulations show that these chaotic weak chimeras persist for $\e>0$. 
We calculated the maximal Lyapunov exponent~$\lmax$ by numerically integrating~the variational equations for \eqref{eq:DynP}\footnote{Integration for $T=7000$ time units was carried out in MATLAB using the standard adaptive Runge--Kutta scheme with relative and absolute error tolerances of~$10^{-9}$ and~$10^{-11}$ respectively.}. As shown in Figure~\ref{fig:N4CWCSaddleScan}, chaotic weak chimeras apparently exist for most values $\e\lessapprox 0.3$.

\subsection{An attracting chaotic weak chimera}

As indicated in Corollary~\ref{cor:StabilitySwap} the chaotic weak chimeras of saddle type can be made attracting in~$\Tornn$ with a suitable local perturbation to the coupling function~$g$. Define
\[
\Bm(x) := \begin{cases}\exp\fleft(-\frac{1}{1-x^2}\right)&\text{if} -1<x<1,\\
0& \text{otherwise}\end{cases}
\]
and let $a\in\R$, $b \in (0, \pi)$, $c\in \Tor$ be parameters. Consider the ``bump function'' $\Bm_{abc}(\phi) = a \Bm\big(\frac{\phi}{b}-c\big)$ 
with~$\phi$ taken modulo~$2\pi$ with values in $(-\pi, \pi]$. Thus, $\Bm_{abc}(\phi)$ is a $2\pi$-periodic~$\Cinf$ function. Define
\begin{equation}
\label{eq:ghat}
\gh := {g} + \Bm_{abc}.
\end{equation}
Let~$D$ be an open set with $\Ai\subset D\subset\C$. Now choose $a, b, c$ such that $F^{(\e, g)}|_D=F^{(\e, \gh)}|_D$ and $\gh'(0)<0$. The dynamics of~\eqref{eq:DynP} with coupling function~$\gh$ give rise to attracting chaotic weak chimeras.
Figure~\ref{fig:CinftyN4CWC}(\textsc{a}) shows a single trajectory for $\e=0.2$ that is initialized slightly 
off the invariant set.

\begin{figure}
\subfloat[Populations of $\maxdim=4$ oscillators]{\centerline{\includegraphics[scale=\imagescaling]{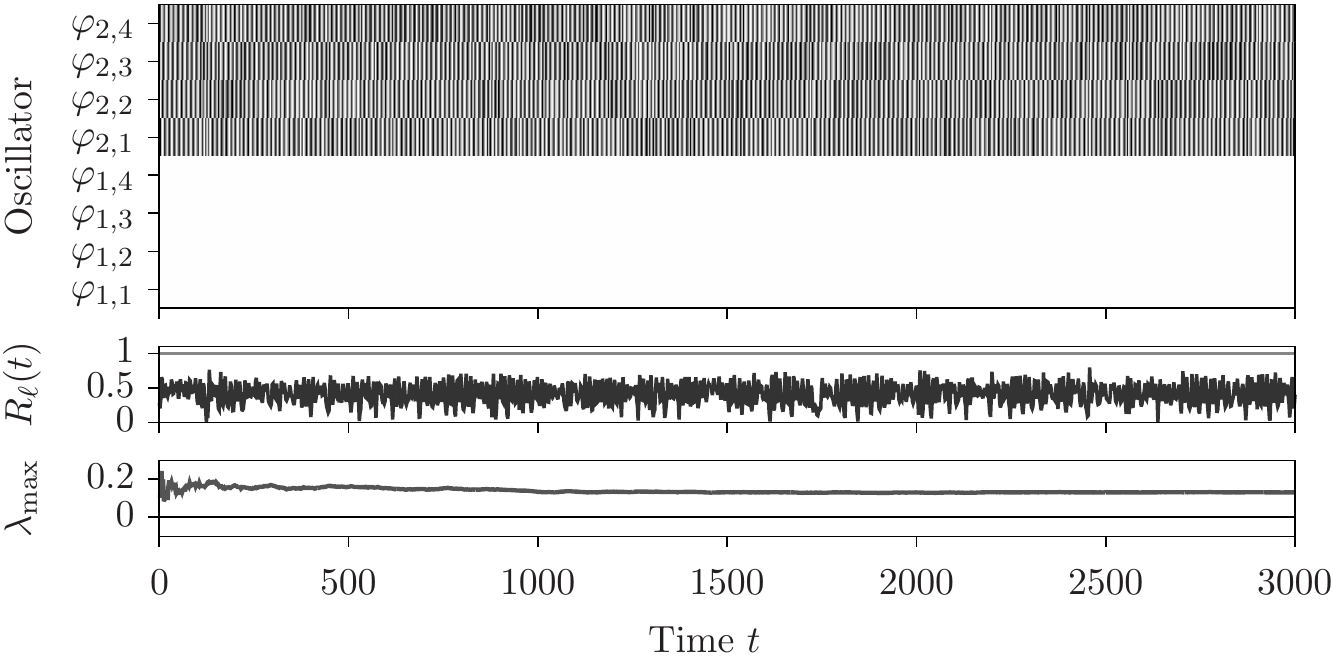}}}\\
\subfloat[Populations of $\maxdim=7$ oscillators\label{subfig:test}]{
\centerline{\includegraphics[scale=\imagescaling]{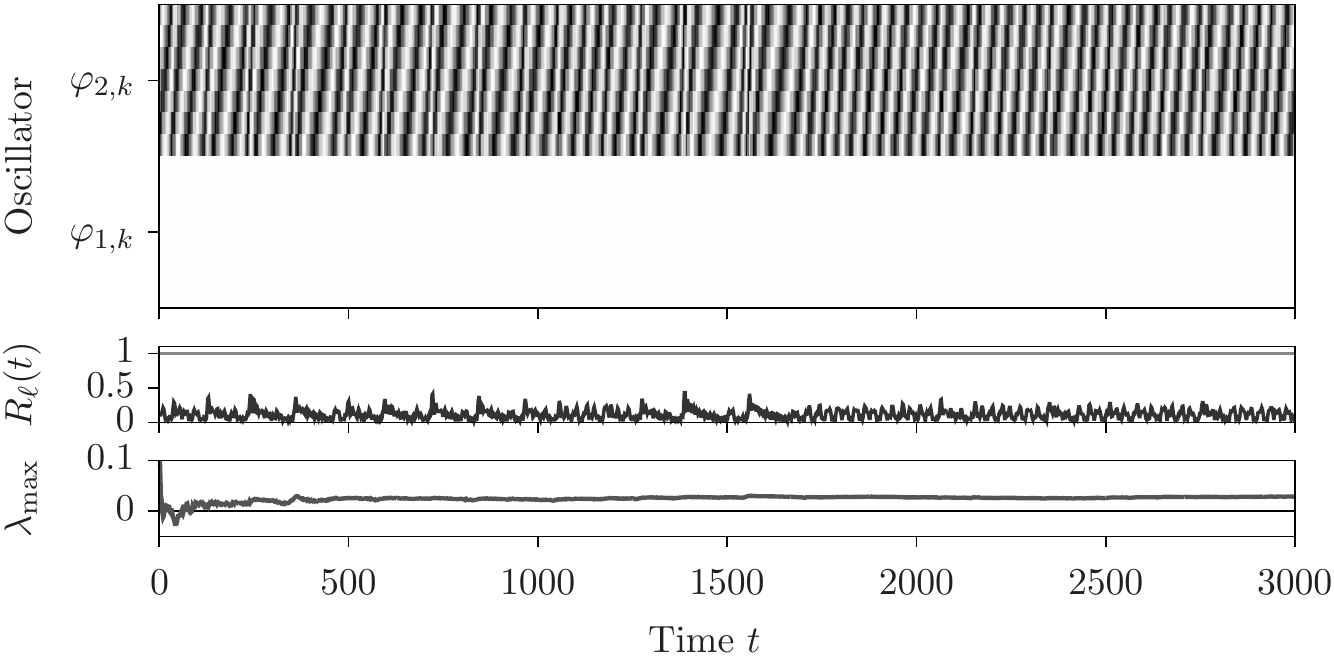}}
}
\caption{\label{fig:CinftyN4CWC}
Attracting chaotic weak chimera in two populations of oscillators as for Figure~\ref{fig:N4CWCSaddle} but with coupling function~$\gh$
with parameters $a=1$, $b=0.1$, $c=0.04$, $\eta_1=0.1104$, and $\eta_2=0.057511$. Note that $\gh(0)=-6.1332$ and $\gh'(0)=-2.5782$ indicating bistability of the fully synchronized state and the chaotic incoherent state for each uncouple cluster. The top panels shows the phases for $\e=0.2$ in greyscale for a co-rotating frame, the middle panels shows the order parameters~$R_\ell(t)$ of the two populations and convergence to the maximal Lyapunov exponent~$\lmax$ is shown in the bottom panels.
}
\end{figure}

Attracting chaotic weak chimeras also appear for larger population sizes. 
With the same coupling function $\gh$ as for $\maxdim=4$ oscillators we 
also find chaotic weak chimeras for $\maxdim\in\sset{5, 7}$; cf.~\cite{Bick2011}. A trajectory for $\maxdim=7$ oscillator in each population is depicted in Figure~\ref{fig:CinftyN4CWC}(\textsc{b}).

\subsection{Other coupling functions giving chaotic weak chimeras}
\label{subsec:trigpoly}

\begin{figure}
\subfloat[$\maxdim=4$ oscillators]{
\centerline{\includegraphics[scale=\imagescaling]{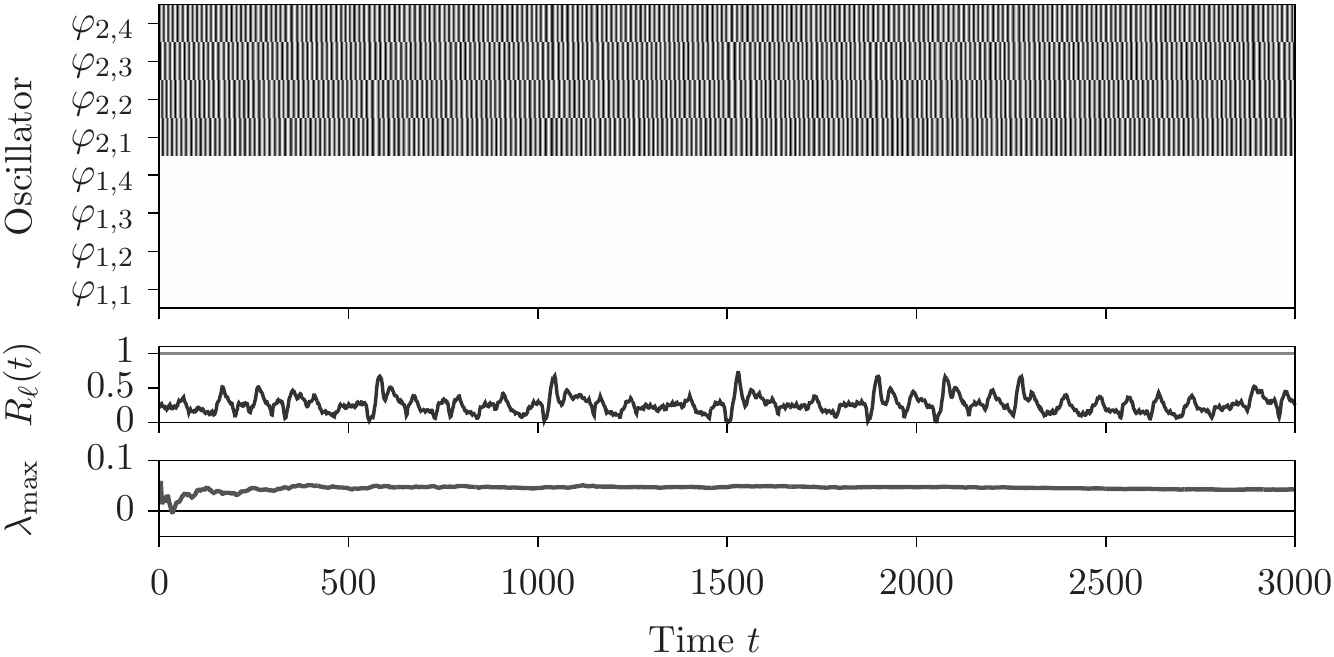}}
}\\
\subfloat[Maximal Lyapunov exponents for varying~$\e$]{
\centerline{\includegraphics[scale=\imagescaling]{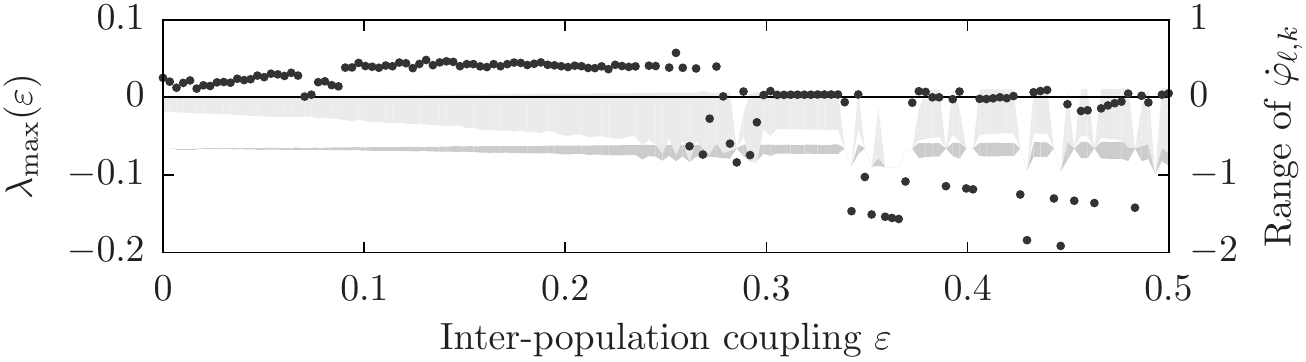}\quad}
}\\
\caption{\label{fig:ComegaN4CWC}
Chaotic weak chimeras appear for two populations of $\maxdim=4$ oscillators with the analytic coupling function with Fourier coefficients given in Table~\ref{tab:FourierCoeafficients}. Panel~(\textsc{a}) shows the dynamics for $\e=0.1$; the phase evolution of each oscillator is shown in a co-rotating frame at the 
speed of a synchronized oscillator in the first panel, the order parameters~$R_\ell(t)$ of each population in the second panel and the maximal Lyapunov exponent~$\lmax$ in the bottom panel. Panel~(\textsc{b}) shows positive maximal Lyapunov exponent~$\lmax(\e)$ for fixed initial conditions chosen on the attractor in the top panel using $T=7000$ time units to approximate the exponent and the range of $\dot\vphi_{\ell,k}$; cf.~Figure~\ref{fig:N4CWCSaddleScan}.
}
\end{figure}

The appearance of attracting chaotic weak chimeras is not limited to the perturbed coupling function constructed above; these typically have infinitely many nontrivial coefficients in their Fourier expansion. Attracting chaotic weak chimeras can also be found for trigonometric polynomial coupling functions with only finitely many nontrivial Fourier modes. Consider the dynamics of~\eqref{eq:DynP} with two populations of $\maxdim=4$ phase oscillators and coupling function 
\[
g(\phi) = \sum_{r=1}^{L} c_r \cos (r\phi)+ s_r\sin(r\phi)
\] 
with $L=11$ and Fourier coefficients as specified in Table~\ref{tab:FourierCoeafficients} in Appendix~\ref{app:TrigPoly}. This coupling function is analytic and the dynamics show that there is an attracting chaotic weak chimera for a range of $\e$; cf.~Figure~\ref{fig:ComegaN4CWC}(\textsc{a}). Our simulations indicate that apart from the chaotic weak chimera there are other stable attractors in the system. Further numerical investigation, using the fixed initial conditions on the chaotic weak chimera in Figure~\ref{fig:ComegaN4CWC}(\textsc{a}), is summarized in Figure~\ref{fig:ComegaN4CWC}(\textsc{b}). Positive maximal Lyapunov exponents can arise for a range of positive coupling values $\e\lessapprox 0.15$. It is possible that adiabatic continuation---that is,
using a point on the attractor for one parameter value as an initial condition for
a nearby parameter value---may give 
more detailed insights into how the chaotic weak chimeras develop and 
bifurcate as~$\e$ is varied.


\section{Discussion}
\label{sec:discuss}

In Section~\ref{sec:CWC} we prove a general existence result for weak chimeras in coupled phase oscillator systems. Chimeras are constructed in ``modular'' networks that are weakly coupled but the numerical investigations indicate that they exist even beyond the weak coupling limit. As dynamically invariant sets, the weak chimeras are not transient---``persistent'' chimera states in systems with generalized coupling were recently observed numerically~\cite{Suda2015}. But the existence of weak chimeras of saddle type induces transient dynamics for initial conditions close to the
saddle.

While our results are stated for the symmetric case that each population has the same number of oscillators, it is straightforward to generalize the construction to asymmetric systems with populations of differing sizes, say of~$n_1$ and~$n_2$ oscillators. If the oscillators belonging to the 
coherent region are not only frequency synchronized but also phase locked (their phase difference stays constant over time) the minimal number of oscillators to get similar chaotic weak chimera dynamics is $n_1+n_2=5$ since the dynamics are effectively three dimensional; however in this case the oscillators will not longer be indistinguishable.

The weak chimeras of saddle type (i.e., with both stable and unstable directions) are certainly of interest as analogues to the states studied 
in~\cite{Wolfrum2011b}. Further existence results of such chaotic weak chimeras may be possible with the methods mentioned in Section~\ref{sec:Persistence}: normally hyperbolic sets persist under small perturbations. Moreover, for suitable choice of parameters, such chimera saddles could have connecting orbits, thus leading to transitions from one chimera state to the next one. Constructions of such connections could be seen as a form of control of spatially localized dynamics similar to chimera control~\cite{Bick2014a}.

The relationship between the chaotic weak chimeras we constructed here and ``classical'' chimera states in multiple populations needs to be clarified further: in~\cite{Abrams2004} the coupling function only contains a single nontrivial Fourier mode and the coupling strength between the populations is almost as strong as within a population. Moreover, ``large'' populations are considered in the continuum limit of infinitely many oscillators where the dynamics reduce to mean field equations~\cite{Abrams2004, Panaggio2015a}. By contrast, the chaotic weak chimeras here arise in weakly coupled populations of four to seven oscillators with coupling given by functions with four or more nontrivial Fourier components. Continuation of these solutions in a suitable parameter space may shed some light on whether there these solutions are directly related. It is worth noting that the chaotic weak chimeras constructed here in such small networks also exhibit chaotic fluctuations of the order parameter~\cite{Bick2011} similar what is observed for large ensembles of interacting nonsmooth oscillators~\cite{Pazo2014}. Clearly, our results for small populations extend to more general (limit cycle) oscillators whose phase reduction is given by the coupling functions of our construction; see~\cite{Kori2008} for approaches to design general oscillator systems with a desired phase reduction. In fact, we expect chaotic dynamics to be more common for general oscillator networks that are not necessarily weakly coupled (as long as phases can still be defined, see remarks in~\cite{Bick2015d}) as amplitude variations facilitate chaotic dynamics in globally coupled identical oscillators~\cite{Nakagawa1993}.

In summary, the notion of a weak chimera yields a mathematically precise definition of chimera states for finite dimensional systems. Here we show that weak chimeras which mimic the dynamical behavior of regular chimeras can be constructed explicitly. We anticipate that a similar study 
of weak chimeras may give further insight into how chimeras arise in finite-dimensional coupled phase oscillator networks.

\section*{Acknowledgements}
CB and PA would like to thank Mike Field and Tiago Pereira for helpful discussions. The research leading to these results has received funding from the People Programme (Marie Curie Actions) of the European Union's Seventh Framework Programme (FP7/2007--2013) under REA grant agreement n\textsuperscript{o} 626111 (CB).

\bibliographystyle{plain}
{\scriptsize
\bibliography{citations_18_9} 
}

\appendix

\section{A trigonometric polynomial coupling function}
\label{app:TrigPoly}
Table~\ref{tab:FourierCoeafficients} gives coefficients for a trigonometric polynomial coupling function that gives an attracting chaotic weak chimera for the system discussed in Section~\ref{subsec:trigpoly}.

\begin{table}[h]
\begin{align*}
c_0 &= -0.48239 & s_0 &= 0.0538\\
c_1 &= -0.48239 & s_1 &= -0.05766\\
c_2 &= -0.23244 & s_2 &= 0.03754\\
c_3 &= -0.20325 & s_3 &= 0.0313\\
c_4 &= 0.01322 & s_4 &= -0.00626\\
c_5 &= 0.01261 & s_5 &= -0.0074\\
c_6 &= 0.01191 & s_6 &= -0.00849\\
c_7 &= 0.01111 & s_7 &= -0.00951\\
c_8 &= 0.01023 & s_8 &= -0.01045\\
c_9 &= 0.00927 & s_9 &= -0.01131\\
c_{10} &= 0.00823 & s_{10} &= -0.01209
\end{align*}
\caption{
\label{tab:FourierCoeafficients}
Fourier coefficients of a trigonometric polynomial coupling function that gives a chaotic
weak chimera for the eight oscillator system discussed in Section~\ref{subsec:trigpoly}.}
\end{table}

\end{document}